\theoremstyle{plain}
\newtheorem{theorem}{Theorem}
\newtheorem{proposition}[theorem]{Proposition}
\newtheorem{lemma}[theorem]{Lemma}
\newtheorem{corollary}[theorem]{Corollary}
\theoremstyle{definition}
\newtheorem{definition}[theorem]{Definition}
\newtheorem{remark}[theorem]{Remark}
\newtheorem{notation}[theorem]{Notation}
\theoremstyle{remark}
\newtheorem{claim}[theorem]{Claim}
\author{Narda Cordero-Michel
\and Hortensia Galeana-S{\'a}nchez }
\title[New Bounds for the Dichromatic Number]{New Bounds for the Dichromatic Number of a Digraph \thanks{This research was supported by grants UNAM-DGAPA-PAPIIT IN104717, IN108715 and CONACyT CB-2013/219840.}}
\affiliation{
  Instituto de Matem{\'a}ticas, UNAM, M{\'e}xico}
\keywords{directed graph, acyclic coloring, DFS algorithm}
\begin{document}
\publicationdetails{21}{2019}{1}{7}{4914}
\maketitle
\begin{abstract}
The \emph{chromatic number} of a graph $G$, denoted by $\chi(G)$, is the minimum $k$ such that $G$ admits a $k$-coloring of its vertex set in such a way that each color class is an independent set (a set of pairwise nonadjacent vertices). 

The \emph{dichromatic number} of a digraph $D$, denoted by $\chi_A(D)$, is the minimum $k$ such that $D$ admits a $k$-coloring of its vertex set in such a way that each color class is acyclic. 
 
In 1976, Bondy proved that the chromatic number of a digraph $D$ is at most its \emph{circumference}, the length of a longest cycle.

  Given a digraph $D$, we will construct three different graphs whose chromatic numbers bound $\chi_A(D)$.

  Moreover, we prove: i) for integers $k\geq 2$, $s\geq 1$ and $r_1, \ldots, r_s$ with $k\geq r_i\geq 0$ and $r_i\neq 1$ for each $i\in[s]$, that if all cycles in $D$ have length $r$ modulo $k$ for some $r\in\{r_1,\ldots,r_s\}$, then $\chi_A(D)\leq 2s+1$; ii) if $D$ has girth $g$ and there are integers $k$ and $p$, with $k\geq g-1\geq p\geq 1$ such that $D$ contains no cycle of length $r$ modulo $\lceil \frac{k}{p} \rceil p$ for each $r\in \{-p+2,\ldots,0,\ldots,p\}$, then $\chi_A (D)\leq \lceil \frac{k}{p} \rceil$; iii) if $D$ has \emph{girth} $g$, the length of a shortest cycle, and circumference $c$, then $\chi_A(D)\leq \lceil \frac{c-1}{g-1} \rceil +1$, which improves, substantially, the bound proposed by Bondy. 
  
 Our results show that if we have more information about the lengths of cycles in a digraph, then we can improve the bounds for the dichromatic number known until now.
\end{abstract}

\section{Introduction}
\label{sec:in}
Let $G=(V(G),E(G))$ an undirected graph with vertex set $V(G)$ and edge set $E(G)$. A set $I\subset V$ is called \emph{independent} whenever $I$ has no pair of adjacent vertices. 
A \emph{proper $k$-coloring} of $G$ is a function $c\colon V(G)\to \{1,2, \ldots,k\}$ such that each color class, $c^{-1}(i)$, is an independent set for each $i \in \{1,2, \ldots,k\}$. We say that $G$ is \emph{$k$-colorable} if there exists a proper $k$-coloring of $G$.
The \emph{chromatic number} of $G$, denoted by $\chi(G)$, is the minimum $k$ such that $G$ admits a proper $k$-coloring of its vertex set.

Let $D =(V(D),A(D))$ be a loopless directed graph (digraph) with vertex set $V(D)$ and arc set $A(D)$.
A digraph $H$ is a \emph{subdigraph} of $D$ if $V(H)\subset V(D)$ and $A(H)\subset A(D)$ and every arc in $A(H)$ has both end vertices in $V(H)$; if $V(H)=V(D)$ we sill say that $H$ is a \emph{spanning} subdigraph of $D$. The \emph{subdigraph induced} by $S\subset V(D)$, denoted by $D\langle U \rangle$, is the subdigraph of $D$ whose vertex set is $U$ and whose arc set consists of all arcs in $D$ that have both end vertices in $U$. 
A digraph $D$ is \emph{strong} if, for every pair $u$, $v$ of distinct vertices in $V(D)$, there exists a directed path from $u$ to $v$. A \emph{strong component} of a digraph $D$ is a maximal induced subdigraph of $D$ which is strong.
A digraph $D$ is said to be \emph{acyclic} if $D$ contains no directed cycle; and a set of vertices $U$ of a digraph $D$ is \emph{acyclic} whenever $D\langle U\rangle$ is an acyclic digraph.
The dichromatic number of a digraph $D$, denoted by $\chi_A(D)$, is the minimum $k$ such that $D$ admits a $k$-coloring of its vertex set in such a way that each color class is acyclic. This number was independently introduced by Mohar and his collaborators in two papers, \cite{Bokal2004227} and \cite{Mohar2003107}, and by \cite{Neumann-Lara1982265}, as a generalization of the chromatic number of graphs, in this way: take a loopless digraph $D$ such that all of its arcs are symmetric and consider the graph $G$ obtained from $D$ by replacing each pair of symmetric arcs by an undirected edge, then the dichromatic number of $D$ is equal to the chromatic number of $G$, as acyclic colorings in $D$ are proper colorings in $G$ and vice versa. 

As the dichromatic number of a digraph is a generalization of the chromatic number of an undirected graph, many authors have studied the way to generalize results for proper colorings in graphs to acyclic colorings in digraphs, as \cite{Bokal2004227, Harutyunyan2011170, Harutyunyan20121823, Keevash2013693, Mohar2003107}. In particular, \cite{Chen2007923} proved  that the problem of determining if the dichromatic number of a given digraph $D$ equals 2 is $NP$-complete, even when $D$ is a tournament.

\cite{Bondy1976277} proved that the chromatic number of a digraph $D$ is at most its \emph{circumference}, the length of a longest cycle. Notice that any proper coloring of a loopless digraph $D$ gives adjacent vertices different colors, so it is also an acyclic coloring and thus the chromatic number is an upper bound for the dichromatic number of $D$. In this way, the circumference is also an upper bound for $\chi_A(D)$.

\cite{Neumann-Lara1982265} proved  that 
\begin{math}\chi_A(D)=  \max\{ \chi_A(H)\ |\ H \text{ is a strong component of } D\}, \end{math} for any digraph $D$.
Thanks to this, we can focus our attention only on strongly connected digraphs.

Concerning acyclic colorings,  \cite{Neumann-Lara1982265} proved that for any fixed integers $k$ and $r$ with $ k\geq r\geq 2$, 
whenever a digraph $D$ contains no cycle of length 0 or 1 modulo $r$, then 
\begin{math} \chi_A(D)\leq k. \end{math}
In this direction, \cite{Chen2015210} gave a more general result. They proved for integers $k$ and $r$ with $k\geq 2$ and 
\begin{math} k\geq r\geq 1 \end{math} 
that: i) if a digraph $D$ contains no cycle of length $1$ modulo $k$, then $D$ can be colored with $k$ colors so that each color class is a stable set; ii) if a digraph $D$ contains no cycle of length $r$ modulo $k$, then $D$ can be colored with $k$ colors so that each color class induces an acyclic subdigraph of $D$; iii) if an undirected graph $G$ contains no cycle of length $r$ modulo $k$, then $G$ is $k$-colorable if $r\neq 2$ and $(k+1)$-colorable otherwise.

\cite{Tuza1992236} used a depth first search (DFS) tree of a graph to study its proper colorings, he proved that if a graph $G$ has an orientation $D$ such that every cycle $C$ of length 
\begin{math} l(C)\equiv 1 \pmod{k} \end{math} 
contains at least $l(C)/k$ arcs in each direction, then 
\begin{math} \chi(G)\leq k; \end{math}
and as a consequence he obtained that if, for some positive integer $k$ ($k\geq 2$), an undirected graph $G$ contains no cycle whose length minus one is a multiple of $k$, then $G$ is $k$-colorable. Moreover, by using the DFS algorithm, if $G$ has $n$ vertices and $m$ edges, then a proper $k$-coloring can be found in $c(n+m)$ steps, for some constant $c$ independent of the value of $k$.
Now we will use the directed version of the DFS algorithm to study the dichromatic number of a digraph.

We will be quoting the following results on the chromatic number of graphs:

\begin{theorem}[\cite{Brooks1941194}]
\label{brooks' theorem}
Let $G$ be a connected graph with maximum degree $\Delta$. If $G$ is neither an odd cycle nor a complete graph, then 
\begin{math} \chi(G)\leq \Delta. \end{math}
Otherwise 
\begin{math} \chi(G)=\Delta + 1. \end{math}
\end{theorem}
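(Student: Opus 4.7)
The plan is to combine a greedy vertex-colouring with a carefully chosen ordering, after dispatching the easy degree ranges by inspection. For $\Delta \leq 2$ every connected graph is a path, an even cycle, or an odd cycle and the conclusion is immediate; likewise $\chi(G) = \Delta + 1$ when $G$ is complete or an odd cycle is obvious. So I focus on the substantive case $\Delta \geq 3$ with $G$ neither complete nor an odd cycle, and aim to exhibit a proper $\Delta$-colouring.

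If $G$ is not $\Delta$-regular, choose a vertex $v_n$ with $\deg(v_n) \leq \Delta - 1$ and order the vertices $v_1, \ldots, v_n$ by decreasing distance from $v_n$ in a BFS tree rooted there. Greedy colouring along this order succeeds: every $v_i$ with $i < n$ has its BFS parent later in the ordering, so at most $\Delta - 1$ of its neighbours are already coloured when its turn comes, and $v_n$ itself has fewer than $\Delta$ neighbours in total.

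The hard case is when $G$ is $\Delta$-regular. The strategy is to produce three vertices $v_n, v_1, v_2$ with $v_1 v_n, v_2 v_n \in E(G)$, $v_1 v_2 \notin E(G)$, and $G - \{v_1, v_2\}$ connected. Given such a triple, I would order the remaining vertices by reverse BFS from $v_n$ inside $G - \{v_1, v_2\}$, place $v_1, v_2$ at the very beginning, and colour greedily with $c(v_1) = c(v_2) = 1$. Each intermediate vertex has its BFS parent later in the order, so sees at most $\Delta - 1$ already-used colours; at the final step, the $\Delta$ neighbours of $v_n$ are all coloured, but $v_1$ and $v_2$ among them share a colour, so at most $\Delta - 1$ colours are forbidden at $v_n$ and one of the remaining colours completes the colouring.

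The main obstacle is the existence of the triple $(v_1, v_2, v_n)$, which I would obtain by a case analysis on vertex-connectivity. If $G$ is $3$-connected, then because $G$ is not complete some vertex $v_n$ has two non-adjacent neighbours $v_1, v_2$, and $G - \{v_1, v_2\}$ is still connected by $3$-connectivity. If $G$ is $2$-connected but not $3$-connected, fix a $2$-cut $\{x, y\}$ and exploit the component structure of $G - \{x, y\}$ to place $v_1, v_2$ in distinct components while choosing $v_n$ so that $G - \{v_1, v_2\}$ remains connected via $\{x, y\}$. Finally, if $G$ has a cut vertex, decompose into blocks: at each block the cut vertex has strictly smaller degree in the block, so the non-regular subcase yields a $\Delta$-colouring of each block, and the colourings are glued by permuting colour classes at the cut vertices. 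Executing the $2$-connected non-$3$-connected subcase while preserving connectivity of $G - \{v_1, v_2\}$ is the most delicate step of the argument.
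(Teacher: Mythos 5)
This statement is Brooks' theorem, which the paper does not prove: it is quoted from the literature (Brooks, 1941) and used as a black box, so there is no in-paper argument to compare yours against. Your proposal follows the standard Lov\'asz-style proof: dispatch $\Delta\leq 2$ and the exceptional graphs by inspection, handle non-regular graphs by greedy colouring along a reverse BFS order from a vertex of degree less than $\Delta$, and in the regular case manufacture a vertex $v_n$ with two non-adjacent neighbours $v_1,v_2$ that receive the same colour, with $G-\{v_1,v_2\}$ connected so that the same reverse-BFS greedy argument applies. The overall architecture, the non-regular case, the $3$-connected case, and the block decomposition for graphs with a cut vertex are all correct as written.

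The genuine gap is exactly where you flag it: the $2$-connected, non-$3$-connected subcase. Choosing a $2$-cut $\{x,y\}$, setting $v_n=x$, and taking $v_1,v_2$ to be arbitrary neighbours of $x$ in two distinct components of $G-\{x,y\}$ does not by itself guarantee that $G-\{v_1,v_2\}$ is connected: by $2$-connectivity $G-v_1$ is connected, but $v_2$ may be a cut vertex of $G-v_1$, and nothing in your sketch rules this out. The standard repair is different in a small but essential way: one picks $v_n$ to be a vertex of the $2$-cut, observes that $G-v_n$ is connected but separable, and chooses $v_1$ and $v_2$ to be neighbours of $v_n$ lying in two distinct \emph{end-blocks} of $G-v_n$, neither of them a cut vertex of $G-v_n$. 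Such neighbours exist because $2$-connectivity forces $v_n$ to have a neighbour in every end-block of $G-v_n$ other than its cut vertex; then $G-v_n-\{v_1,v_2\}$ stays connected because deleting a non-cut vertex of an end-block does not break the block tree, and $v_n$ reattaches via a third neighbour since $\Delta\geq 3$. Without this end-block refinement (or an equivalent argument showing your chosen pair leaves the graph connected), the triple whose existence drives the whole regular case has not been established.
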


\begin{theorem}[\cite{Tuza1992236}]
\label{tuza}
Let $G$ be a graph; $k$ be an integer such that $k \geq 2$. If there is no cycle of length $1$ modulo $k$ in $G$, then 
\begin{math} \chi_A(G)\leq k. \end{math}
\end{theorem}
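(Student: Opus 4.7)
The plan is to build a proper $k$-coloring directly from a depth-first search (DFS) spanning tree of $G$. Assuming without loss of generality that $G$ is connected (otherwise I would color each connected component separately and relabel), I would fix an arbitrary root $r$ and run DFS from $r$ to obtain a spanning tree $T$. Writing $d(v)$ for the depth of $v$ in $T$ (with $d(r)=0$), the proposed coloring is simply
\[
c(v) \;:=\; d(v) \bmod k, \qquad v\in V(G).
\]

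To verify that $c$ is proper, I would take an arbitrary edge $uv\in E(G)$ and split into two cases according to whether it is a tree edge or not. For tree edges, $|d(u)-d(v)|=1$, so since $k\geq 2$ the two endpoints receive distinct colors. For the non-tree case, I would invoke the classical structural property of DFS on an \emph{undirected} graph: every non-tree edge is a \emph{back edge}, i.e.\ it joins a vertex to one of its ancestors in $T$. Assuming $u$ is the ancestor of $v$, the unique $u$--$v$ path in $T$ together with the edge $uv$ forms a cycle $C$ in $G$ of length $d(v)-d(u)+1$. By hypothesis no cycle of $G$ has length $\equiv 1 \pmod k$, so $d(v)-d(u)+1\not\equiv 1\pmod k$, whence $d(u)\not\equiv d(v)\pmod k$ and therefore $c(u)\neq c(v)$. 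This exhibits a proper $k$-coloring and yields $\chi(G)\leq k$.

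The only real obstacle is the appeal to the ancestor/descendant structure of non-tree edges in a DFS of an undirected graph; I would cite this as a standard fact from the DFS analysis rather than reproving it. Note also that the argument is constructive: since DFS runs in $O(n+m)$ time, the coloring is produced within the same running time, independent of $k$, which matches the algorithmic remark made in the introduction about Tuza's original proof.
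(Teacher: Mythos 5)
Your proof is correct, and it is essentially Tuza's original argument: colour each vertex by its DFS depth modulo $k$, observe that tree edges change depth by exactly one (so $k\ge 2$ separates their endpoints), and use that every non-tree edge of an undirected DFS joins an ancestor--descendant pair, so equal depths modulo $k$ would force a fundamental cycle of length $1 \pmod{k}$. The paper never proves Theorem \ref{tuza} directly: it quotes it and only recovers it at the end as a corollary of Theorem \ref{theo 03} (take $g=2$, $p=1$, so $\hat{k}=k$ and the forbidden residue set is $\{1\}$) via the identification of a graph with its symmetric digraph; the directed analogue of your colouring is Lemma \ref{lemma no cycles of length 1 modulo k}. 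The colouring is the same in both places --- DFS depth reduced modulo $k$ --- but the directed machinery needs an ingredient your undirected argument gets for free: a directed DFS also produces forward and cross arcs, so the paper must first establish Lemma \ref{lemma backward arc} (every directed cycle contains a backward arc) before "only back edges matter" goes through, whereas for undirected DFS the absence of cross edges is the standard fact you cite. So your route is more elementary and self-contained for the stated theorem, while the paper's route buys the stronger directed generalizations. Two minor points: you prove $\chi(G)\le k$ while the statement is written with $\chi_A$, which is harmless since a colour class of a symmetric digraph is acyclic iff it is independent; and when you invoke the hypothesis on the fundamental cycle you implicitly need it to be a genuine cycle (length at least $3$), which holds because a non-tree back edge of a simple graph joins vertices whose depths differ by at least $2$ --- and in any case a depth difference of $1$ already gives distinct colours since $k\ge 2$.
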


A digraph $T$ is an \emph{tree} if $T$ is connected, it contains no directed cycle and its underlying graph contains no cycle. 
Moreover, $T$ is an \emph{out-tree} if $T$ is a tree with just one vertex $r$ of in-degree zero and any other vertex different from $r$ has in-degree one. The vertex $r$ is the \emph{root} of $T$. 
If an out-tree $T$ is a spanning subdigraph of a digraph $D$, $T$ is called an
\emph{out-branching}.

A fixed arbitrary tree $T$ obtained from $D$ by the DFS algorithm will be called a \emph{DFS tree} of $D$, which is an out-branching rooted at a vertex $r$. 

In this paper we prove several properties of a DFS tree $T$ of a strongly connected digraph $D$, which allow us to partition $V(D)$ into acyclic sets and in this way we obtain upper bounds for $\chi_A(D)$. 
Let $T$ be a DFS tree of a strongly connected digraph $D$ with girth $g$, and let $V_0$, $V_1$, \ldots, $V_t$ be the levels of $T$: 
\begin{enumerate}
\item Let $G_T$ be the undirected graph with vertex set 
\begin{math} V(G_T)=V(T)=V(D) \end{math} 
and $uv\in E(G_T)$ whenever there is a backward arc between $u$ and $v$ in $D$. Then  
\begin{math} \chi_A(D)\leq \chi (G_T). \end{math}
\item Let $G^D$ be the undirected graph with vertex set 
\begin{math} V(D)=\{V_0, V_1, \ldots, V_t\} \end{math} 
and such that $V_iV_j\in E(G^D)$ where $i>j$ if and only if there is a backward arc $(u,v)\in A(D)$ with $u\in V_i$ and $v\in V_j$. Then 
\begin{math} \chi_A(D)\leq \chi (G^D). \end{math}
\item Let 
\begin{math} k=\lceil{\frac{t+1}{g-1}}\rceil; \end{math} 
\begin{math} U_h = \bigcup_{j=0}^{g-2} V_{h(g-1)+j} \end{math} 
for each $h\in [k]_0$, where 
\begin{math} V_{(k-1)(g-1)+j} = \emptyset \end{math} 
whenever 
\begin{math} (k-1)(g-1)+j>t; \end{math}
and $G$ be the undirected graph with vertex set 
\begin{math} V(G)=\{U_0, \ldots, U_{k-1}\} \end{math} 
and $U_iU_j\in E(G)$ whenever $i>j$ and there is a backward arc $(u,v)\in A(D)$ such that $u\in U_i$ and $v\in U_j$. Then 
\begin{math} \chi_A(D)\leq \chi (G). \end{math}
\end{enumerate}

We also prove our main results:

\begin{theorem}
\label{theo cycles of length r_i modulo k}
Let $k$ and $s$ be two integers such that $s\geq 1$; let $r_1$, \ldots, $r_s$ be $s$ integers such that 
\begin{math} k\geq r_i\geq 0 \end{math} 
for each $i\in[s]$ and $D$ a strongly connected digraph such that all cycles in $D$ have length $r$ modulo $k$ for some 
\begin{math} r\in\{r_1,\ldots,r_s\}. \end{math}
If $r_i\neq 1$ for each $i\in[s]$, then 
\begin{math} \chi_A(D)\leq 2s+1. \end{math}
\end{theorem}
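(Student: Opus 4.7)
The plan is to combine the DFS-tree reduction $\chi_A(D)\leq \chi(G_T)$ stated in item (1) of the outline above with a circulant-graph coloring of the DFS levels modulo $k$, whose chromatic number is controlled by Brooks' theorem (Theorem~\ref{brooks' theorem}).

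Fix a DFS tree $T$ of $D$ rooted at some vertex $r$, with level partition $V_0,V_1,\ldots,V_t$; write $L(v)$ for the level of $v$ in $T$. For any backward arc $(u,v)\in A(D)$ we have $L(u)>L(v)$, and the tree path in $T$ from $v$ down to $u$ followed by $(u,v)$ forms a directed cycle of $D$ of length $L(u)-L(v)+1$. By hypothesis this length is congruent to $r_i\pmod k$ for some $i\in[s]$, so $L(u)-L(v)\equiv r_i-1\pmod k$. Since $r_i\in\{0,2,3,\ldots,k\}$, the residue $r_i-1\pmod k$ lies in $\{1,2,\ldots,k-1\}$; in particular it is nonzero.

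Set $S=\{(r_i-1)\bmod k:i\in[s]\}\subseteq\{1,\ldots,k-1\}$, so $|S|\leq s$ and $0\notin S$. Build a circulant graph $H$ with vertex set $\{0,1,\ldots,k-1\}$ by joining $a$ and $b$ whenever $(a-b)\bmod k\in S\cup(-S)$; each vertex of $H$ has degree at most $2|S|\leq 2s$. Theorem~\ref{brooks' theorem} applied component-wise gives $\chi(H)\leq 2s$, unless a component is $K_{2s+1}$ or an odd cycle, in which cases it still requires only $2s+1$ or $3\leq 2s+1$ colors (using $s\geq 1$). Fix a proper coloring $\phi\colon\{0,\ldots,k-1\}\to\{1,\ldots,2s+1\}$ of $H$.

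Define $c\colon V(D)\to\{1,\ldots,2s+1\}$ by $c(v)=\phi(L(v)\bmod k)$. For any backward arc $(u,v)$ we showed $(L(u)-L(v))\bmod k\in S$, so the residues $L(u)\bmod k$ and $L(v)\bmod k$ are adjacent in $H$, and hence $c(u)\neq c(v)$. Thus $c$ is a proper coloring of $G_T$, and item (1) of the outline yields $\chi_A(D)\leq\chi(G_T)\leq 2s+1$. The main conceptual move is to translate the hypothesis ``$s$ allowed cycle-length residues modulo $k$'' into ``$s$ forbidden nonzero level-difference residues modulo $k$ for backward arcs''; the only subtle point is verifying that the Brooks-theorem exceptional components fit inside the $2s+1$ budget.
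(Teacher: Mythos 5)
Your proof is correct and follows essentially the same route as the paper: both translate the hypothesis into the statement that every backward arc joins DFS levels whose difference is congruent to some $r_i-1\not\equiv 0\pmod k$, group the levels by residue modulo $k$, observe the resulting adjacency graph has maximum degree at most $2s$, and apply Brooks' theorem. The only cosmetic difference is that you color the full circulant graph on $\mathbb{Z}_k$ with connection set $S\cup(-S)$, whereas the paper colors the (possibly sparser) subgraph of realized backward adjacencies; a proper coloring of your supergraph restricts to one of theirs, so the arguments coincide.
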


\begin{theorem}
\label{theo 03}
Let $D$ be a strongly connected digraph with girth $g$; $k$ and $p$ be two integers such that 
\begin{math} k\geq g-1 \geq p \geq 1. \end{math}
If there is no cycle of length $r$ modulo 
\begin{math} \hat{k}=\lceil{\frac{k}{p}}\rceil{p} \end{math} 
in $D$ for each 
\begin{math} r\in \{-p+2,\ldots, 0,\ldots, p\}, \end{math}
then 
\begin{math} \chi_A(D)\leq \lceil{\frac{k}{p}}\rceil. \end{math}
\end{theorem}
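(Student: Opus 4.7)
The plan is to prove the bound by exhibiting an explicit acyclic $\lceil k/p \rceil$-coloring built from a DFS tree of $D$. Morally this is the construction given as item~(3) in the introduction, except that the blocks have size $p$ (rather than $g-1$), and instead of invoking the chromatic number of an auxiliary graph one colours the blocks \emph{cyclically} modulo $q := \lceil k/p \rceil$.

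Concretely, fix a DFS tree $T$ of $D$ with levels $V_0, V_1, \ldots, V_t$, and set $c(w) = \lfloor i/p \rfloor \bmod q$ for every $w \in V_i$. To show that $c$ is acyclic, I would use the standard DFS fact that every directed cycle of $D$ contains at least one \emph{back arc} (an arc from a descendant to a strict ancestor in $T$); the usual argument takes the vertex $v$ of the cycle with smallest DFS discovery time and applies the white-path theorem to the in-neighbour of $v$ on the cycle.

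Assume for contradiction that $c$ admits a monochromatic directed cycle, and let $(u,v)$ be one of its back arcs, with $u \in V_i$, $v \in V_j$ and $i > j$. Concatenating $(u,v)$ with the unique directed $v$-$u$ path in $T$ yields a cycle $C'$ in $D$ of length $\ell = i - j + 1 \geq g$. Writing $i = ap + b$ and $j = cp + d$ with $0 \leq b, d < p$, the fact that $u$ and $v$ have the same colour forces $a \equiv c \pmod{q}$, hence $(a-c)p \equiv 0 \pmod{\hat{k}}$, and so
\[
\ell \equiv (b-d+1) \pmod{\hat{k}}, \qquad b-d+1 \in \{-p+2, \ldots, p\}.
\]
The assumption $g-1 \geq p$ rules out $a = c$ (otherwise $\ell \leq p < g$); therefore $a > c$, and $\ell$ lies in a forbidden residue class modulo $\hat{k}$, contradicting the cycle-length hypothesis on $D$.

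The main obstacle is really the DFS lemma that every directed cycle uses a back arc; once that is in hand, the rest is a short modular computation. One should also verify the boundary case $q = 1$: there the forbidden set $\{-p+2, \ldots, p\}$ already exhausts every residue modulo $p$, so $D$ is acyclic and the bound $\chi_A(D) \leq 1$ is consistent.
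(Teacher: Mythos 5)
Your proposal is correct and follows essentially the same route as the paper: partition the DFS levels into blocks of $p$ consecutive levels, colour the blocks cyclically modulo $\lceil k/p\rceil$, and derive a contradiction from a back arc of a monochromatic cycle via the elementary cycle $P\cup(u,v)$, whose length is forced into a forbidden residue class modulo $\hat{k}$. The back-arc lemma you flag as the main obstacle is exactly the paper's Lemma~\ref{lemma backward arc}, and your exclusion of the case $a=c$ via $g-1\geq p$ matches the paper's use of Lemma~\ref{lemma g-1 consecutive generations acyclic}.
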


\begin{theorem}
\label{circumference and girth}
Let $D$ be a digraph with at least one cycle and let $c$ and  $g$ be its circumference and girth, respectively. Then 
\begin{math} \chi_A(D)\leq \lceil{ \frac{c-1}{g-1}}\rceil+1. \end{math}
\end{theorem}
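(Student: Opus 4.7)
The plan is to reduce to the strongly connected case via Neumann-Lara's identity, then apply the third DFS-based construction described above to upper-bound $\chi_A(D)$ by the chromatic number of a small-bandwidth "block graph" $G$, and finally produce a modular coloring of $G$ whose number of colors matches the theorem's bound.

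By Neumann-Lara's theorem, $\chi_A(D)=\max\{\chi_A(H):H\text{ is a strong component of }D\}$. Every strong component $H$ satisfies $g(H)\geq g$ and $c(H)\leq c$, or else contains no cycle (in which case $\chi_A(H)=1$); since $\lceil (x-1)/(y-1)\rceil+1$ is monotone non-decreasing in $x$ and non-increasing in $y$, it suffices to prove the bound assuming $D$ is strong. Fix a DFS tree $T$ of $D$ with levels $V_0,V_1,\ldots,V_t$, and, for $k=\lceil (t+1)/(g-1)\rceil$, assemble the blocks $U_h=\bigcup_{j=0}^{g-2}V_{h(g-1)+j}$ for $h\in\{0,\ldots,k-1\}$ (treating levels past $V_t$ as empty). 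Construction (iii) of the introduction then gives $\chi_A(D)\leq \chi(G)$, where $G$ is the undirected graph on the blocks with edge $U_hU_{h'}$ whenever some backward arc of $D$ joins a vertex of $U_h$ to one of $U_{h'}$.

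The core step is the bandwidth estimate: if $U_hU_{h'}\in E(G)$ with $h>h'$, then $h-h'\leq \lceil (c-1)/(g-1)\rceil$. Indeed, a witnessing backward arc $(u,v)$ with $u\in V_a\subseteq U_h$, $v\in V_b\subseteq U_{h'}$ closes, together with the tree path from $v$ down to $u$, a cycle of length $a-b+1\leq c$, so $a-b\leq c-1$; while $a\geq h(g-1)$ and $b\leq h'(g-1)+g-2$ give $a-b\geq (h-h')(g-1)-(g-2)$. Combining yields $(h-h')(g-1)\leq c+g-3$, and the identity $\lfloor (c+g-3)/(g-1)\rfloor=\lceil (c-1)/(g-1)\rceil$ delivers the claim. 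Setting $M=\lceil (c-1)/(g-1)\rceil$ and coloring each block $U_h$ by $h\bmod(M+1)$ produces a proper coloring of $G$ with $M+1$ colors, since any two blocks receiving the same color differ by a positive multiple of $M+1$, hence by strictly more than $M$, and therefore cannot be adjacent in $G$. This gives $\chi(G)\leq M+1$, and the theorem follows.

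The main obstacle is the tightness of the arithmetic in the bandwidth estimate: the combination of the cycle-length bound and the length of a block is just barely enough, so one must invoke the precise floor/ceiling identity to land at $\lceil (c-1)/(g-1)\rceil+1$ rather than at a slightly weaker value. All remaining ingredients — Neumann-Lara's decomposition into strong components, the acyclicity of each individual block $U_h$ (each backward arc spans at least $g-1$ levels by the girth, so none fits inside one block, and every cycle in $D$ uses a backward arc of a DFS tree), and the inequality $\chi_A(D)\leq \chi(G)$ — are already furnished by results quoted earlier in the paper.
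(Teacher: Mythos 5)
Your proof is correct and follows essentially the same route as the paper's: the same DFS levels, the same blocks of $g-1$ consecutive levels, and the same coloring of blocks modulo $\lceil \frac{c-1}{g-1}\rceil+1$, with your bandwidth estimate on the condensation graph being just the contrapositive of the paper's direct contradiction via a monochromatic cycle (both reduce to the inequality $(M+1)(g-1)-(g-2)>c-1$). The only differences are presentational: you make the reduction to strong components explicit, where the paper relies on its standing convention, and you route the bound through $\chi(G)$ rather than verifying acyclicity of the color classes directly.
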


Our results show that if we have more information about the lengths of cycles in a digraph, then we can improve the bounds for the dichromatic number known until now. The restriction on the cycles lengths in Theorems \ref{theo cycles of length r_i modulo k} and \ref{theo 03} provide better bounds. In Theorem \ref{theo cycles of length r_i modulo k} whenever $s<\lfloor \frac{k}{2} \rfloor$, $\chi_A(D)<k$ and in Theorem \ref{theo 03} whenever $k=sp$ with $s\geq 3$, $\chi_A(D)\leq s < k$. Our results are different from those obtained by \cite{Chen2015210}. However, these two cases lower their bound. 

Notice that along our proofs we will use the DFS algorithm to obtain acyclic colorings and this algorithm is polynomial. So, we will be able to color digraphs in an ordered fast way. 

The text is divided into four sections, including the introduction. In Section \ref{sec:definitions} we will give the definitions and the notation that we will use throughout this document. In section \ref{sec:preliminary results} we will see some properties of a digraph $D$ derived from the structure of a DFS tree of $D$ and we will construct three different graphs from $D$ whose chromatic numbers will bound the dichromatic number of $D$. Finally, in Section \ref{sec:main results} we will prove our three main results: Theorems \ref{theo cycles of length r_i modulo k}, \ref{theo 03} and \ref{circumference and girth}. 

\section{Definitions}
\label{sec:definitions}
In this paper $D=(V(D),A(D))$ will denote a loopless digraph. Two vertices $u$ and $v$ are adjacent if $(u,v)$ or $(v,u)$ is in $A(D)$. The \emph{underlying graph} of $D$ is the graph $G$ obtained from $D$ by replacing each arc $(u,v)$ for the undirected edge $uv$ and then deleting all multiple edges between every pair of vertices apart from one.
The subdigraph induced by a set of vertices $U\subseteq V(D)$ will be denoted by 
\begin{math} D\langle U\rangle; \end{math}
and if $H$ is a subdigraph of $D$, the subdigraph induced by $V(H)$ will be simply denoted by 
\begin{math} D\langle H\rangle. \end{math} 

The set of integers $\{1,2, \ldots, k\}$ will be denoted by $[k]$, and  $[k]_0$ will denote the set $\{0$, 1, \ldots, $k-1\}$.

Here the paths and cycles are always directed.
The \emph{girth} (resp. \emph{circumference}) of $D$ is the shortest (resp. longest) cycle. 
A \emph{proper $k$-coloring} of $D$ is a proper $k$-coloring of its underlying graph $G$ (this is a coloring of $V(D)$ such that no two adjacent vertices are colored alike). The \emph{chromatic number} of $D$, denoted by $\chi(D)$, is the minimum $k$ such that $D$ admits a proper $k$-coloring.
An \emph{acyclic $k$-coloring} of $D$ is a function 
\begin{math} c\colon V(D)\to [k] \end{math} 
such that $c^{-1}(i)$ induces an acyclic subdigraph in $D$ for each $i\in [k]$. The \emph{dichromatic number} of $D$, denoted by $\chi_A(D)$, is the minimum $k$ such that $D$ admits an acyclic $k$-coloring. 

A subdigraph $T$ of a digraph $D$ is an \emph{out-branching} if: (i) $T$ is connected, it contains no directed cycle and its underlying graph contains no cycle, (ii) $T$ has just one vertex $r$ of in-degree zero and any other vertex different from $r$ has in-degree one, the vertex $r$ is the \emph{root} of $T$ and (iii) $T$ is a spanning subdigraph of $D$ ($V(T)=V(D)$).

A fixed arbitrary tree $T$ obtained from $D$ by the DFS algorithm will be called a \emph{DFS tree} of $D$, which is an out-branching rooted at a vertex $r$. 

Recall that: $f(v)$ denotes the time a vertex $v\in V(T)$ is explored by the algorithm and, is called the \emph{DFS label} of $v$; each vertex has a unique DFS label and any two different vertices have different DFS labels;
* Whenever there is a $uv$-path in $T$, 
$v$ is a \emph{descendant} of $u$ and $u$ is an \emph{ancestor} of $v$.
Finally, an arc $(u,v)\in A(D)$ is: (i) a  \emph{tree arc} if $(u,v)\in A(T)$, (ii) a  \emph{backward arc} if $u$ is a descendant of $v$, (iii) a
\emph{forward arc} if $u$  is an ancestor of $v$ and (iv) a \emph{cross arc} if $u$ is neither an ancestor nor a descendant of $v$.
For further details we refer the reader to the book of \cite{Bang-Jensen2009}, pages 26-29.

Along this paper, $D$ will be a strongly connected digraph and $T$ will be a DFS tree of $D$ rooted at a vertex $r$ and $V$ will denote $V(D)=V(T)$.

\section{Preliminary results}
\label{sec:preliminary results}

In this section we will see some properties of a digraph $D$ derived from the structure of a DFS tree of $D$ that will be useful in the proofs of the main results; and we will construct three different graphs from $D$ whose chromatic numbers bound the dichromatic number of $D$.

We will start with four properties of a DFS tree of a digraph derived from the definitions. Later we will prove Lemma \ref{lemma backward arc}, that will be the key to construct acyclic sets of vertices in a digraph. After we will see different partitions of the vertex set of a digraph $D$ into acyclic sets that are based on a DFS tree of $D$ and we also see a few (simpler) bounds for the dichromatic number of $D$.

\begin{remark}
\label{DFS 0} It follows from the definitions that in $D$ there are four kinds of arcs: tree arcs, forward arcs, backward arcs and cross arcs. If $(u,v)$ is: (i) a tree arc then $f(u)<f(v)$, (ii) a forward arc then $f(u)<f(v)$, (iii) a backward arc then $f(u)>f(v)$ and (iv) a cross arc then $f(u)>f(v)$ (see Proposition \ref{cross arcs}).

Observe that, as there are only four kinds of arcs, if an arc  $(u,v)\in A(D)$ satisfies $f(u)<f(v)$, then it is either a tree arc or a forward arc; and if $f(u)>f(v)$, then it is either a backward arc or a cross arc.
\end{remark}

\begin{remark}
\label{DFS 1} If $f(u)<f(v)$ and $v$ is an out-neighbor of $u$ in $D$, then there is a $uv$-path in $T$. 
Even more, if $f(u)<f(v)$ and $v$ is a descendant of $u$, then for each $w$ such that 
\begin{math} f(u)<f(w)<f(v) \end{math} 
we have $w$ is also a descendant of $u$. This follows from the definition of the DFS algorithm, as all vertices explored between the time a vertex $u$ is visited for the first time, namely $f(u)$, and before $u$ is completely processed are descendants of $u$ (book of \cite{Gross2006}, page 524).
\end{remark}

\begin{remark}
\label{DFS 2}
Whenever there is a $uv$-path in $T$, this is unique in $T$, $u$ must be an ancestor of $v$ in $T$ and $f(u)<f(v)$; moreover, every walk in $T$ is a path. 
\end{remark}

Next proposition was proved in the book of \cite{Gross2006}, page 525, and it stands that each cross arc goes from a branch explored later to a branch explored earlier.

\begin{proposition}
\label{cross arcs}
Let $e$ be a cross arc of a DFS tree from a vertex $u$ to a vertex $v$. Then $f(u)>f(v)$.
\end{proposition}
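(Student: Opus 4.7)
The plan is to prove the contrapositive via the structural facts already assembled in Remarks~\ref{DFS 0}, \ref{DFS 1}, and \ref{DFS 2}, so essentially no new machinery is needed; the argument reduces to observing that the DFS label ordering of the endpoints of an arc forces a specific type for that arc, and a cross arc is the only one among the four types whose endpoint relation is not yet pinned down by Remark~\ref{DFS 0}.

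First I would dispatch the equality case: since each vertex has a unique DFS label (as noted in the definitions), we cannot have $f(u)=f(v)$, so exactly one of $f(u)<f(v)$ or $f(u)>f(v)$ holds. Then I would assume, for contradiction, that $f(u)<f(v)$, and invoke Remark~\ref{DFS 1} with $v$ as an out-neighbor of $u$ in $D$. That remark delivers a $uv$-path inside $T$, which by Remark~\ref{DFS 2} forces $u$ to be an ancestor of $v$ in $T$. But by the definition of a cross arc, $u$ is neither an ancestor nor a descendant of $v$, producing the desired contradiction and leaving only $f(u)>f(v)$.

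The main (and really only) obstacle is to notice that we must separately rule out $f(u)=f(v)$, which is immediate but easy to forget, and to be careful that Remark~\ref{DFS 1} applies as soon as we have an arc $(u,v)$ with $f(u)<f(v)$ (it does not matter whether $(u,v)$ is tree, forward, backward, or cross a priori). Once those two observations are pinned down, the statement follows in a couple of lines and does not require any deeper analysis of the DFS algorithm beyond what has already been distilled into the three preceding remarks.
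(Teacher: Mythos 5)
Your argument is correct, and it is worth noting that the paper itself gives no proof of this proposition at all: it simply cites the textbook of Gross and Yellen (page 525) and moves on. Your two-line derivation is therefore a genuine addition rather than a rephrasing. The key point you get right is avoiding circularity: Remark~\ref{DFS 0}(iv) cannot be used here because it explicitly cites Proposition~\ref{cross arcs}, whereas the first sentence of Remark~\ref{DFS 1} (an arc $(u,v)$ with $f(u)<f(v)$ forces a $uv$-path in $T$) is justified directly from the DFS algorithm, so it is logically prior. From there, Remark~\ref{DFS 2} (or the starred definition) makes $u$ an ancestor of $v$, contradicting the definition of a cross arc, and the uniqueness of DFS labels rules out $f(u)=f(v)$. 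What your approach buys is a self-contained proof within the paper's own framework; what the citation buys the authors is brevity. Both are acceptable, and your version could be dropped into the paper as a short proof without changing anything else.
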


\begin{lemma}
\label{lemma backward arc}
Each cycle in $D$ contains at least one backward arc. 
\end{lemma}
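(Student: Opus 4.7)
Fix an arbitrary directed cycle $C$ in $D$, let $v$ be the vertex of $C$ with the smallest DFS label, and traverse $C$ starting at $v$ as $v=u_0,u_1,\ldots,u_{k-1},u_k=v$. It is enough to show that every $u_j$ with $1\leq j\leq k-1$ is a descendant of $v$ in $T$: once that is established, the arc $(u_{k-1},v)$ that closes $C$ goes from a descendant of $v$ to $v$, so by the definition given in Section~\ref{sec:definitions} it is a backward arc.

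I would prove this descendant claim by induction on $j$. The base $j=1$ is immediate from the first part of Remark~\ref{DFS 1}: we have $f(v)<f(u_1)$ by minimality of $f(v)$ on $V(C)$, and $u_1$ is an out-neighbour of $v$ in $D$, so $T$ contains a $vu_1$-path and hence $u_1$ is a descendant of $v$. For the inductive step, assume $u_j$ is a descendant of $v$ and split on the sign of $f(u_{j+1})-f(u_j)$. If $f(u_j)<f(u_{j+1})$, the same first part of Remark~\ref{DFS 1}, applied to the arc $(u_j,u_{j+1})$, puts $u_{j+1}$ as a descendant of $u_j$ and therefore of $v$. If instead $f(u_j)>f(u_{j+1})$, the minimality of $f(v)$ gives $f(v)<f(u_{j+1})<f(u_j)$; since $u_j$ is already known to be a descendant of $v$, the second (nesting) part of Remark~\ref{DFS 1} applied to the triple $(v,u_j,u_{j+1})$ forces $u_{j+1}$ to be a descendant of $v$ as well.

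The delicate step, and the only place where care is needed, is the second case of the inductive step. There the arc $(u_j,u_{j+1})$ itself could perfectly well be a backward or a cross arc of $T$, so we cannot conclude directly that $u_{j+1}$ is a descendant of $u_j$. What is actually needed is weaker, namely only that $u_{j+1}$ is a descendant of $v$; this is exactly what the bracketing half of Remark~\ref{DFS 1} delivers, because the DFS discovery time of $u_{j+1}$ lies strictly between those of $v$ and of the already-known descendant $u_j$. Applying the induction all the way to $j=k-1$ then closes the proof.
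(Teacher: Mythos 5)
Your proof is correct, and it takes a genuinely different route from the paper's. The paper starts from the vertex $u_M$ of $C$ with the \emph{largest} DFS label, walks backwards around $C$ to the first index $t$ where the labels stop increasing, shows $u_t$ is an ancestor of $u_M$ via a chain of tree/forward arcs, and then (splitting on whether $t=M+1$) uses the nesting half of Remark~\ref{DFS 1} to exhibit $(u_{t-1},u_t)$ or $(u_M,u_{M+1})$ as a backward arc. You instead anchor at the vertex $v$ with the \emph{smallest} label and prove by induction around the whole cycle that every vertex of $C$ is a descendant of $v$, so the specific arc of $C$ entering $v$ is backward. The tools are the same (both halves of Remark~\ref{DFS 1}, plus Remark~\ref{DFS 2} implicitly for transitivity of descendance), but the arguments are organized differently: yours is the classical ``the edge of a cycle entering its earliest-discovered vertex is a back arc'' argument and yields the stronger structural fact that the entire cycle lies in the DFS subtree rooted at $v$, while the paper's is a shorter local analysis near the maximum that only locates \emph{some} backward arc. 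You also correctly identify and handle the one delicate point — that in the decreasing-label case $(u_j,u_{j+1})$ need not be a tree or forward arc, so only descendance from $v$ (via the bracketing property) can be concluded. Both proofs are valid; yours gives slightly more information at the cost of an explicit induction.
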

\begin{proof}
Let 
\begin{math} C=(u_0,u_1,\ldots,u_{k-1},u_0) \end{math} 
be a cycle in $D$. 
In what follows the subscripts will be taken modulo $k$.

Consider the set of DFS labels of vertices in $C$, this set consists of $k$ different natural numbers, so we can chose the greatest. Take $M\in [k]_0$ such that $f(u_M) > f(u_j)$ for each $j\neq M$, $j\in [k]_0$. Hence, $f(u_{M-1})< f(u_M)$ and $(u_{M-1},u_M)$ is a tree or a forward arc (see Remark \ref{DFS 0}). 

Notice that there is an index $s\in [k]_0$ such that $f(u_{s-1})>f(u_s)$, as $f(u_M)> f(u_{M+1})$. Let $t$ be the first such index starting at $M$ and going on the reverse order of $C$.
Thus, $f(u_{t-1})>f(u_t)$ and 
\begin{math} f(u_t)<f(u_{t+1})<\cdots <f(u_{M-1})<f(u_M). \end{math} 

Observe that $u_t$ is an ancestor of $u_M$ as $(u_t,u_{t+1})$, \ldots, $(u_{M-1},u_M)$ are arcs in $D$ with $f(u_i)<f(u_{i+1})$ for each $i=t$, $t+1$,\ldots, $M-1$, and thus they are forward or tree arcs, by Remark \ref{DFS 0}.

\begin{claim} $t\neq M$. This is clear as $f(u_{M-1})<f(u_M)$ and $f(u_{t-1})>f(u_t)$ by the definitions of $M$ and $t$. 
\end{claim}

\begin{enumerate}[{Case }1:]
\item $t=M+1$. Then $u_t=u_{M+1}$ is an ancestor of $u_M$, and thus $(u_M,u_{M+1})$ is a backward arc (see * in the definitions).
\item $t\neq M+1$ ($t-1\neq M$). Then $u_{t-1}$, $u_t$ and $u_M$ are three different vertices which satisfy 
\begin{math} f(u_t)< f(u_{t-1})<f(u_M) \end{math} 
and $u_t$ is an ancestor of $u_M$, then by Remark \ref{DFS 1}, $u_{t}$ is an ancestor of $u_{t-1}$ and thus $(u_{t-1},u_{t})\in A(C)$ is a backward arc. 
\end{enumerate}
\end{proof}

\begin{definition}
A cycle $C$ in $D$ is an \emph{elementary cycle} whenever there is a backward arc $f=(u,v)\in A(C)$ such that $C-f$ is a $vu$-path contained in $T$.
\end{definition}

A direct consequence of Lemma \ref{lemma backward arc} is that if $c$ is a coloring of a strongly connected digraph $D$, such that end vertices of backward arcs (with respect to some DFS tree) have different colors, then $c$ is an acyclic coloring of $D$. This is clear as all cycles of $D$ contain at least one backward arc, and no backward arc is monochromatic. 

We will take advantage of this fact to construct bounds for the dichromatic number of $D$.

\begin{definition} 
\label{def:underlying backward graph}
Given  a connected subdigraph $H$ of $T$ define the undirected graph $G_H$ with vertex set $V(G_H)=V(H)$ and $uv\in E(G_H)$ whenever there is a backward arc between $u$ and $v$ in $D$. We will call $G_H$ the \emph{underlying backward graph} of $H$ relative to $T$.
\end{definition}

\begin{lemma}
\label{lemma underlying back graph}
Let $D$ be a strongly connected digraph and $T$ a DFS tree rooted at $r$. Let $H$ be a connected subdigraph of $T$ and  $G_H$ its underlying backward graph relative to $T$. Then  
\begin{math} \chi_A(D\langle H \rangle)\leq \chi (G_H). \end{math}
\end{lemma}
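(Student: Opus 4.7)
The plan is to reduce the acyclic colorability of $D\langle H\rangle$ to the ordinary chromatic number of $G_H$ via Lemma \ref{lemma backward arc}. More precisely, I would set $k = \chi(G_H)$, fix a proper $k$-coloring $c\colon V(G_H)\to [k]$, and use $c$ verbatim as a candidate acyclic $k$-coloring of $D\langle H\rangle$; this is legitimate since, by Definition \ref{def:underlying backward graph}, $V(G_H) = V(H) = V(D\langle H\rangle)$.

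The verification that $c$ is acyclic on $D\langle H\rangle$ is a short argument by contradiction. Suppose some color class $c^{-1}(i)$ induced a cycle $C$ in $D\langle H\rangle$. Then $C$ is in particular a cycle of $D$, so Lemma \ref{lemma backward arc} (applied to $D$ with the DFS tree $T$) guarantees that $C$ contains at least one backward arc $(u,v)$. Both endpoints $u,v$ lie on $C$, hence in $V(H)$, so by the definition of the underlying backward graph we have $uv\in E(G_H)$. But $c$ is a proper coloring of $G_H$, so $c(u)\neq c(v)$, contradicting monochromaticity of $C$. This yields $\chi_A(D\langle H\rangle)\leq k = \chi(G_H)$.

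There is no genuine obstacle here: the lemma is essentially a packaging of Lemma \ref{lemma backward arc}, which does all the work by ensuring that every cycle of $D$ is ``witnessed'' by at least one backward arc. The only point to watch is that $C$, being a cycle of the induced subdigraph $D\langle H\rangle$, is automatically a cycle of $D$ (so that Lemma \ref{lemma backward arc} applies) and that the backward arc it contains has both endpoints in $V(H)$ (so that it becomes an edge of $G_H$); both facts are immediate from the definitions. No further structural property of $T$, of $H$, or of the strong connectivity of $D$ is needed for this step.
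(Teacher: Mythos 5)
Your proof is correct and follows essentially the same route as the paper's: take a proper $\chi(G_H)$-coloring of $G_H$, transfer it to $D\langle H\rangle$ via $V(G_H)=V(H)$, and use Lemma \ref{lemma backward arc} to conclude that no cycle can be monochromatic since each contains a backward arc whose endpoints form an edge of $G_H$. Your explicit remarks that a cycle of $D\langle H\rangle$ is a cycle of $D$ and that the backward arc's endpoints lie in $V(H)$ are the same (implicit) observations the paper relies on.
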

\begin{proof}
Let 
\begin{math} c\colon V(G_H)\to [\chi(G_H)] \end{math} 
be a proper coloring of $V(G_H)$ with $\chi (G_H)$ colors. As $c$ is proper, each color class is a stable set. 
This coloring in $V(G_H)=V(H)$ is a coloring of $D\langle H \rangle$ in which the ends of each backward arc have different colors.

By Lemma \ref{lemma backward arc}, any cycle in  $D$ and in particular in $D\langle H \rangle$ contains a backward arc, therefore  $D\langle H \rangle$ has no monochromatic cycle with this coloring. Hence 
\begin{math} \chi_A(D\langle H \rangle)\leq \chi (G_H). \end{math}
\end{proof}

\begin{corollary}
Let $D$ be a strongly connected digraph and $T$ a DFS tree rooted at $r$; and $G_T$ its underlying backward graph relative to $T$. Then  
\begin{math} \chi_A(D) \leq \chi (G_T). \end{math}
\end{corollary}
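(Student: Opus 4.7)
My plan is to apply Lemma \ref{lemma underlying back graph} with the choice $H = T$. First I would verify that $T$ satisfies the hypotheses of that lemma: being a DFS tree, $T$ is by construction an out-branching of $D$, hence its underlying graph is a tree and so $T$ is connected, and $V(T) = V(D)$. In particular, $T$ qualifies as a connected subdigraph of itself, so the lemma is applicable.

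Next I would identify the relevant objects under this specialization. Since $V(T) = V(D)$, the induced subdigraph $D\langle T\rangle$ is just $D$ itself, and therefore $\chi_A(D\langle T\rangle) = \chi_A(D)$. Moreover, the underlying backward graph of $T$ relative to $T$, as given by Definition \ref{def:underlying backward graph}, coincides with the graph $G_T$ named in the statement of the corollary. So the conclusion of the lemma in this case reads $\chi_A(D) \leq \chi(G_T)$, which is exactly what we want to prove.

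There is no genuine obstacle: the corollary is a direct specialization of Lemma \ref{lemma underlying back graph} to $H = T$, and no additional argument is required beyond the trivial identifications above. The only substantive content, namely the fact that proper coloring of $G_T$ breaks every cycle of $D$ via a non-monochromatic backward arc, has already been carried out in the proof of the lemma using Lemma \ref{lemma backward arc}.
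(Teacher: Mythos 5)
Your proposal is correct and matches the paper's intent exactly: the corollary is stated without proof precisely because it is the immediate specialization of Lemma \ref{lemma underlying back graph} to $H = T$, using $V(T) = V(D)$ so that $D\langle T\rangle = D$. No further comment is needed.
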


\begin{proposition}
\label{proposition chromatic number of a branch}
Let $D$ be a strongly connected digraph and let $T$ be a DFS spanning tree of $D$ with root $r$. Let $x_1$,  \ldots, $x_k$ be the out-neighbors of $r$ in $T$. If $T_1$,  \ldots, $T_k$ are the subtrees of $T$ such that 
$x_i\in V(T_i)$, 
\begin{math} T=\bigcup_{i=1}^k T_i\end{math} 
and 
\begin{math} V(T_i) \cap V(T_j)=\{r\} \end{math} 
for all $i\neq j$. Then 
\begin{math} \chi_A(D)\leq \max_{i\in [k]}\{\chi(G_{T_i})\} \end{math} 
for all $i\in [k]$.
\end{proposition}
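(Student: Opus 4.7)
The plan is to combine Lemma \ref{lemma underlying back graph} applied to each subtree $T_i$ with a careful handling of the shared root $r$. First, I would establish the key structural observation: every backward arc of $D$ with respect to $T$ has both endpoints in the same subtree $T_i$. Indeed, if $(u,v)$ is a backward arc, then $v$ is an ancestor of $u$ in $T$ and $u\neq r$; hence $u\in V(T_i)\setminus\{r\}$ for exactly one $i\in[k]$, and the unique $ru$-path in $T$ passes through $x_i$ and stays in $T_i$, so $v$, which lies on this path, belongs to $V(T_i)$.

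Next, for each $i\in[k]$ let $c_i\colon V(T_i)\to [\chi(G_{T_i})]$ be a proper coloring of $G_{T_i}$. Since we can freely permute the color classes, I would arrange that $c_i(r)=1$ for every $i\in[k]$. Because $V(T_i)\cap V(T_j)=\{r\}$ for $i\neq j$, the partial colorings agree on the only common vertex and thus glue together into a single function $c\colon V(D)\to [m]$ with $m=\max_{i\in[k]}\{\chi(G_{T_i})\}$.

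To finish, I would verify that $c$ is an acyclic coloring of $D$. Consider any cycle $C$ of $D$. By Lemma \ref{lemma backward arc}, $C$ contains a backward arc $(u,v)$; by the structural observation above, $u,v\in V(T_i)$ for some $i$, and therefore $uv\in E(G_{T_i})$ by Definition \ref{def:underlying backward graph}. Since $c_i$ is a proper coloring of $G_{T_i}$, one has $c(u)=c_i(u)\neq c_i(v)=c(v)$, so $C$ cannot be monochromatic. Hence $\chi_A(D)\leq m=\max_{i\in[k]}\{\chi(G_{T_i})\}$.

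The only subtle point, and the one where I would pay the most attention, is justifying that the colorings $c_i$ can indeed be aligned on $r$; this is immediate from the freedom to permute colors within a proper coloring, but it is the step that makes the gluing well defined and gives the bound in terms of the maximum (rather than the sum) of the $\chi(G_{T_i})$. Everything else is a direct application of Lemma \ref{lemma backward arc} together with the fact that $r$ is the unique vertex shared by distinct subtrees.
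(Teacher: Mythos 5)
Your proposal is correct and follows essentially the same route as the paper's own proof: color each $G_{T_i}$ properly, permute colors so that all colorings agree on $r$, glue, and use Lemma \ref{lemma backward arc} together with the fact that any backward arc has both ends in a single $T_i$ to conclude no cycle is monochromatic. The only difference is that you spell out explicitly why a backward arc cannot straddle two subtrees (via the $ru$-path through $x_i$), a point the paper asserts more briefly.
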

\begin{proof}
Take 
\begin{math} d=\max_{i\in[k]}\{\chi(G_{T_i})\}. \end{math} 
Since 
\begin{math} \chi(G_{T_i})\leq d \end{math}
for each $i\in [k]$, let 
\begin{math} c_i\colon V(G_{T_i})\to [d] \end{math} 
be a proper coloring for each $i \in [k]$. 
We may assume that $c_i(r)=1$ for each $i\in [k]$, since we can permute two colors to get this situation.

Let's define a color function 
\begin{math} c\colon V \to [d+1] \end{math} 
on $D$ as follows:
$c(v)=c_i(v)$ for all $v\in V(G_{T_i})$. Notice that 
\begin{math} V=\bigcup_{i=1}^k V(G_{T_i})=\bigcup_{i=1}^k V(T_i), \end{math} 
\begin{math} V(T_i)\cap V(T_j)=\{r\} \end{math} 
whenever $i\neq j$ and $c_i(r)=1$ for each $i\in [k]$, then $c$ is well defined all over $D$.

Let $U_1$, \ldots, $U_d$ be the color classes in $D$. Proceeding by contradiction we will prove that $D\langle U_s\rangle$ is acyclic for all $1\leq s \leq d$. 

Suppose that there is a cycle $C$ in $D\langle U_s \rangle$ for some $s\in [k]$. By Lemma \ref{lemma backward arc}, there is a backward arc $(u,v)\in A(C)$ and $c(u)=c(v)$. By definition of backward arc, $u$ is a descendant of $v$ and thus there is an $i\in [k]$ such that 
\begin{math} u,v\in V(T_i)=V(G_{T_i}) \end{math} 
and $uv\in E(G_{T_i})$. Therefore, 
\begin{math} c_i(u)=c(u)=c(v)=c_i(v), \end{math} 
which is impossible as $c_i$ is a proper coloring of $G_{T_i}$. Hence, 
$D\langle U_s \rangle$ is acyclic for each $s\in [k]$ and $\chi_A(D)\leq d$.
\end{proof}

\begin{notation}
Given a strongly connected digraph $D$ and a DFS tree  $T$ of $D$ rooted at $r$. We will denote by $P_u$  the unique $ru$-path contained in $T$  for each vertex $u\in V$.
\end{notation}

\begin{lemma}
\label{lemma paths between relatives}
Let $D$ be a strongly connected digraph and $T$ be a DFS tree of $D$ rooted at $r$.  If $(u,v)$ is a backward arc, then there is a (unique)  $vu$-path $P$ in $T$ such that 
\begin{math} P_u=P_v\cup P \end{math} 
and 
\begin{math} l(P_u)=l(P_v)+l(P). \end{math}
\end{lemma}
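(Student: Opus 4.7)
The plan is to simply chase the definitions and exploit the uniqueness of paths in the out-branching $T$.

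First I would unpack what the hypothesis gives us. By the definition of backward arc (item (ii) in the list after Remark~\ref{DFS 0}, marked $*$), the existence of the backward arc $(u,v)\in A(D)$ means that $u$ is a descendant of $v$ in $T$; equivalently, there is a directed $vu$-path inside $T$. By Remark~\ref{DFS 2}, this $vu$-path is unique in $T$; call it $P$.

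Next I would locate $v$ on the ancestor-chain $P_u$. Since $u$ is a descendant of $v$, the vertex $v$ is an ancestor of $u$ in $T$, so $v$ lies on every $rw$-path in $T$ for $w=u$; concretely, $v$ appears as an internal (or terminal) vertex of $P_u$. Then the concatenation $P_v\cdot P$ (the $rv$-path in $T$ followed by the $vu$-path in $T$) is a walk from $r$ to $u$ lying entirely in $T$. By Remark~\ref{DFS 2}, every walk in $T$ is a path, so $P_v\cdot P$ is a path from $r$ to $u$ in $T$; by the uniqueness of the $ru$-path in the out-branching $T$, this path must coincide with $P_u$, giving $P_u=P_v\cup P$. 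The length formula $l(P_u)=l(P_v)+l(P)$ is then immediate from the fact that $P_v$ and $P$ share only the vertex $v$ and no arcs.

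The main obstacle, if any, is making sure that the concatenation really is a walk in $T$ (hence a path by Remark~\ref{DFS 2}) and that $P_v$ and $P$ meet only at $v$; both facts follow at once because $T$ is an out-branching rooted at $r$ and $v$ lies on $P_u$, so everything before $v$ on $P_u$ agrees with $P_v$ and everything after $v$ on $P_u$ agrees with $P$. No additional structural argument about the DFS labels or about backward arcs beyond the descendancy relation is needed; the lemma is essentially a restatement of the tree-structure of $T$ applied to the ancestor $v$ of $u$.
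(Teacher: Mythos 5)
Your proposal is correct and follows essentially the same route as the paper's own proof: obtain the unique $vu$-path $P$ in $T$ from the descendancy given by the backward arc, note that the concatenation $P_v\cup P$ is an $ru$-walk in $T$ and hence a path by Remark~\ref{DFS 2}, and conclude $P_u=P_v\cup P$ with the additive length formula. The extra care you take in checking that $P_v$ and $P$ meet only at $v$ is a harmless elaboration of the same argument.
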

\begin{proof}
Let $(u,v)$ such an arc, so $u$ is a descendant of $v$; by * in the definition of the DFS tree, there is a $vu$-path $P$ in $T$. By Remark \ref{DFS 2}, $P$ is unique. 

Now, $P_v\cup P$ is a $ru$-walk which, by Remark \ref{DFS 2}, is a path and 
\begin{math} P_u=P_v\cup P \end{math} 
with 
\begin{math} l(P_u)=l(P_v)+l(P). \end{math} 
\end{proof}

\begin{lemma}
\label{lemma partition of V}
Let $D$ be a strongly connected digraph and $T$ be a DFS tree of $D$ rooted at $r$. Let 
\begin{math} V_i=\{u\in V\ |\ l(P_u)=i\} \end{math} 
for each $i$, 
\begin{math} 0\leq i \leq t, \end{math}
where $t$ is the length of a longest path in $T$. The following assertions hold: (i) 
\begin{math} V=\bigcup_{i=0}^t V_i, \end{math} 
(ii) 
\begin{math} V_i\cap V_j=\emptyset \end{math} 
for $i\neq j$ and (iii) 
\begin{math} D\langle V_i \rangle \end{math} 
is acyclic.
\end{lemma}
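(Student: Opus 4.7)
The plan is to observe that parts (i) and (ii) are essentially immediate from the structure of $T$, and then to derive part (iii) by combining Lemma \ref{lemma backward arc} with Lemma \ref{lemma paths between relatives}.

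For (i) and (ii), I would note that $T$ is an out-branching rooted at $r$, so every vertex $u \in V$ is a descendant of $r$ in $T$ and therefore admits a (by Remark \ref{DFS 2}, unique) $ru$-path $P_u$ contained in $T$. Hence $l(P_u)$ is a well-defined nonnegative integer, and by the definition of $t$ it satisfies $0 \leq l(P_u) \leq t$. Thus each $u$ lies in exactly one $V_i$, which gives both $V = \bigcup_{i=0}^{t} V_i$ and $V_i \cap V_j = \emptyset$ whenever $i \neq j$.

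For (iii), I would argue by contradiction. Suppose some $D\langle V_i\rangle$ contains a directed cycle $C$. By Lemma \ref{lemma backward arc}, $C$ has at least one backward arc $(u,v)$, and since $A(C) \subseteq A(D\langle V_i\rangle)$, both $u$ and $v$ belong to $V_i$, so $l(P_u) = l(P_v) = i$. On the other hand, because $(u,v)$ is a backward arc, Lemma \ref{lemma paths between relatives} furnishes a $vu$-path $P$ in $T$ with $l(P_u) = l(P_v) + l(P)$. Since $(u,v)$ is an arc (so $u \neq v$) and $P$ is a directed path from $v$ to $u$ in $T$, we have $l(P) \geq 1$, yielding $l(P_u) > l(P_v)$, which contradicts $l(P_u) = l(P_v) = i$. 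Therefore no such cycle exists and $D\langle V_i \rangle$ is acyclic.

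The argument contains no genuine obstacle: the key structural facts (that every cycle of $D$ hits a backward arc, and that a backward arc strictly increases the $T$-depth from its head to its tail) have already been packaged into Lemmas \ref{lemma backward arc} and \ref{lemma paths between relatives}. The only thing to be careful about is making sure the $vu$-path $P$ in $T$ produced by Lemma \ref{lemma paths between relatives} has positive length, which follows because $u$ is a proper descendant of $v$ (they are distinct endpoints of the backward arc $(u,v)$).
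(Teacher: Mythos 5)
Your proof is correct and follows essentially the same route as the paper: (i) and (ii) from the definition of the DFS tree, and (iii) by contradiction via Lemma \ref{lemma backward arc} and Lemma \ref{lemma paths between relatives}. Your explicit remark that $l(P)\geq 1$ because $u\neq v$ is a detail the paper leaves implicit, but the argument is the same.
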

\begin{proof}
Assertions (i) and (ii) follow directly from the definitions of a DFS tree and the sets $V_i$'s. 

In order to prove (iii) assume by contradiction that 
\begin{math} D\langle V_i\rangle \end{math}
has a cycle 
\begin{math}C=(u_0,u_1,\ldots,u_{k-1},u_0).\end{math} 
By Lemma \ref{lemma backward arc}, $C$ has at least one backward arc $(u_, u^+)$ (where $u\in V(C)$ and $u^+$ is its successor in $C$) and, by Lemma \ref{lemma paths between relatives}, there is a $u^+u$-path $P$ in $T$, such that 
\begin{math} P_u=P_{u^+}\cup P \end{math} 
and 
\begin{math} l(P_u)=l(P_{u^+})+l(P)>l(P_{u^+}). \end{math} 
Then 
\begin{math} i=l(P_u)>l(P_u^+)=i, \end{math} 
which is a contradiction.   
\end{proof}

\begin{definition}
The set 
\begin{math} V_i=\{u\in V\ |\ l(P_u)=i\} \end{math} 
will be called the \emph{$i$-th generation} or the \emph{$i$-th level} of $T$, for each $i\in [t+1]_0$, where $t$ is the length of a longest path in $T$; and we will say that $T$ has \emph{length} $t$.
\end{definition}

\begin{remark}
\label{remark backward arc}
If $(u,v)$ is a backward arc, then $u\in V_i$ and $v\in V_j$, with $i>j$. This is a direct consequence of Lemma \ref{lemma paths between relatives}. 
\end{remark}

\begin{definition}
\label{def:backward spine}
Let $D$ be a strongly connected digraph $D$ and $T$ a DFS tree of $D$ rooted at $r$ such that $T$ has length $t\geq 1$ and let $V_0$, \ldots, $V_t$ be its generations. Let $G^D$ be the graph with vertex set 
\begin{math} V(G^D)=\{V_0,\ldots,V_t\} \end{math} 
and such that $V_iV_j \in E(G^D)$ where $i>j$ if and only if there is a backward arc $(u,v)\in A(D)$ with $u\in V_i$ and $v\in V_j$. $G^D$ will be called the \emph{backward spine} of $D$ relative to $T$.
\end{definition}

The chromatic number of $G^D$ is an upper bound for the dichromatic number of $D$ as we will see in the following proposition:

\begin{proposition}
\label{proposition backward spine}
Let $D$ be a strongly connected digraph, $T$ a DFS tree rooted at $r$ and $G^D$ its backward spine relative to $T$. If 
\begin{math} c'\colon \{V_0,\ldots,V_t\}\to [\chi(G^D)] \end{math} 
is a proper coloring of $G^D$, then the mapping 
\begin{math} c\colon V\to [\chi(G^D)], \end{math}
defined as $c(v)=c'(V_i)$ whenever $v\in V_i$, is an acyclic coloring of $D$. 
Moreover, 
\begin{math}\chi_A(D)\leq \chi(G^D). \end{math}
\end{proposition}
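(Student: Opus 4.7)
The plan is to show that the coloring $c$ produces no monochromatic cycle in $D$, by combining three earlier facts: every cycle has a backward arc (Lemma \ref{lemma backward arc}), the endpoints of a backward arc lie in different generations (Remark \ref{remark backward arc}), and $c'$ is proper on $G^D$.

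First I would verify that $c$ is well-defined. Lemma \ref{lemma partition of V} tells us that $\{V_0, \ldots, V_t\}$ is a partition of $V$, so each vertex $v \in V$ belongs to exactly one generation $V_i$ and the value $c(v) = c'(V_i)$ is unambiguous. Then I would argue by contradiction: assume some color class $c^{-1}(s)$ induces a cycle $C$ in $D$. Apply Lemma \ref{lemma backward arc} to $C$ to obtain a backward arc $(u,v) \in A(C)$, and apply Remark \ref{remark backward arc} to conclude that $u \in V_i$ and $v \in V_j$ with $i > j$. By Definition \ref{def:backward spine}, the existence of such a backward arc forces $V_iV_j \in E(G^D)$. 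Since $c'$ is a proper coloring of $G^D$, we have $c'(V_i) \neq c'(V_j)$, and therefore $c(u) = c'(V_i) \neq c'(V_j) = c(v)$, contradicting $u, v \in c^{-1}(s)$.

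This contradiction shows that every color class of $c$ induces an acyclic subdigraph of $D$, so $c$ is an acyclic coloring of $D$ using $\chi(G^D)$ colors, whence $\chi_A(D) \leq \chi(G^D)$.

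I do not expect a serious obstacle here: the statement is essentially a packaging result that recycles Lemma \ref{lemma backward arc} and Remark \ref{remark backward arc} through the definition of $G^D$. The only thing to be careful about is invoking the right labelling convention (that backward arcs go from higher to lower generation index) so that the edge $V_iV_j$ really sits in $E(G^D)$ as required by Definition \ref{def:backward spine}.
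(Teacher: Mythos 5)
Your argument is correct and follows essentially the same route as the paper's own proof: well-definedness from the partition into generations, then a backward arc of any cycle (Lemma \ref{lemma backward arc}) whose endpoints lie in distinct generations (Remark \ref{remark backward arc}) yields an edge of $G^D$, so properness of $c'$ forbids a monochromatic cycle. The only cosmetic difference is that you phrase it as a contradiction while the paper argues directly that no cycle is monochromatic.
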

\begin{proof}
Let $d=\chi(G^D)$, 
\begin{math} c'\colon V(G^D)\to [d] \end{math} 
be a proper coloring of $G^D$ and 
\begin{math} c\colon V \to [d] \end{math} 
defined as in the hypothesis. 

First, $c$ is well defined since $V_0$, \ldots, $V_t$ form a partition of $V(D)$ and thus $c$ assigns a unique color to each vertex $v\in V(D)$.  

Second, $c$ is acyclic. Take $C$ an arbitrary cycle in $D$, by Lemma \ref{lemma backward arc}, $C$ contains a backward arc $(u,v)$. Let $i,j\in [t+1]_0$ such that $u\in V_i$ and $v\in V_j$, where $i>j$ by Remark \ref{remark backward arc}. Then, $V_i\neq V_j$, $V_iV_j\in E(G^D)$ and 
\begin{math} c'(V_i)\neq c'(V_j). \end{math}
Hence, 
\begin{math} c(u)=c'(V_i)\neq c'(V_j)=c(v), \end{math} 
and $C$ is not monochromatic in $D$. Since there is no monochromatic cycle in $D$, the color class $c^{-}(i)$ is acyclic for each $i\in [d]$.
Moreover, $c$ is an acyclic coloring of $D$ with $d$ colors, therefore 
\begin{math} \chi_A(D)\leq d. \end{math}
\end{proof}

\begin{corollary}
Let $D$ be a strongly connected digraph and $T$ a DFS tree rooted at $r$ such that $T$ has length $t$. Then 
\begin{math} \chi_A(D)\leq t+1. \end{math}
\end{corollary}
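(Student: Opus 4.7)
The plan is to invoke Proposition \ref{proposition backward spine} directly. That proposition gives $\chi_A(D)\leq \chi(G^D)$, where $G^D$ is the backward spine of $D$ relative to $T$. Since $T$ has length $t$, its generations are $V_0,V_1,\ldots,V_t$, so by Definition \ref{def:backward spine} the vertex set of $G^D$ is $\{V_0,\ldots,V_t\}$, which has exactly $t+1$ elements.

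Any undirected graph on $n$ vertices is trivially $n$-colorable (assign distinct colors to all vertices), hence $\chi(G^D)\leq t+1$. Chaining the two inequalities yields $\chi_A(D)\leq \chi(G^D)\leq t+1$.

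There is no real obstacle here; the corollary is essentially a packaging of Proposition \ref{proposition backward spine} together with the trivial upper bound $\chi(H)\leq |V(H)|$ for any graph $H$. The only thing worth making explicit is that the generations $V_0,\ldots,V_t$ are guaranteed to exist (and form a partition of $V(D)$ by Lemma \ref{lemma partition of V}), so that $G^D$ is well defined and has precisely $t+1$ vertices.
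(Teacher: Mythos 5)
Your proof is correct, and it uses the same key ingredient as the paper: Proposition \ref{proposition backward spine}, which reduces the problem to bounding $\chi(G^D)$ for the backward spine $G^D$ on the $t+1$ vertices $V_0,\ldots,V_t$. The only difference is in the final step. You close with the trivial bound $\chi(H)\leq |V(H)|$, which is perfectly adequate for the stated inequality and arguably the cleanest way to get it. The paper instead observes that $\Delta(G^D)\leq t$ and invokes Brooks' theorem (Theorem \ref{brooks' theorem}). That detour buys something extra: Brooks' theorem immediately yields the refinement, recorded in the paper's proof, that $\chi_A(D)\leq \chi(G^D)\leq t$ whenever $G^D$ is neither an odd cycle nor a complete graph. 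So your argument proves exactly the corollary as stated, with less machinery, while the paper's argument proves slightly more. No gap in either case.
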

\begin{proof}
Let $G^D$ the backward spine of $D$ relative to $T$, then 
\begin{math} |V(G^D)|=t+1 \end{math} 
and 
\begin{math} \Delta(G^D)\leq t, \end{math}
where $\Delta(G^D)$ denotes the maximum degree in $G^D$. Therefore, by Theorem \ref{brooks' theorem} and Proposition \ref{proposition backward spine}, 
\begin{math} \chi_A(D)\leq \chi(G^D)\leq t+1. \end{math}

Moreover, if $G^D$ is neither an odd cycle nor a complete graph, then 
\begin{math} \chi_A(D)\leq \chi(G^D)\leq t. \end{math}
\end{proof}

If we have some knowledge about the lengths of cycles in our digraph $D$, we may find a bound for $\chi_A(D)$ by means of Lemma \ref{lemma partition of V}, as we will see. 
\cite{Chen2015210} proved for two integers $r$ and $k$ with $k\geq 2$ and  $k\geq r\geq 1$ that if a digraph $D$ contains no cycle of length $r$ modulo $k$, then $\chi_A(D)\leq k$. 
We will give other bounds for $\chi_A(D)$ in terms of cycle lengths.

\begin{theorem}[\cite{Chen2015210}]
\label{dichro theo 01}
Let $r$ and $k$ be two integers with $k\geq 2$ and  $k\geq r\geq 1$. If a digraph $D$ contains no cycle of length $r$ modulo $k$, then 
$\chi_A(D)\leq k$. 
\end{theorem}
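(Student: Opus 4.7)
The plan is to apply the DFS machinery developed in the previous sections. By Neumann-Lara's reduction to strong components, one may assume $D$ is strongly connected. Fix a DFS tree $T$ of $D$ rooted at some vertex and let $V_0, V_1, \ldots, V_t$ be its levels, so that each vertex $v$ lies in $V_{l(P_v)}$.

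The natural coloring to try is $c\colon V(D)\to[k]_0$ defined by $c(v)=l(P_v)\bmod k$, which uses exactly $k$ colors. To show it is acyclic under the hypothesis, suppose for contradiction that there is a monochromatic cycle $C$ in $D$. By Lemma \ref{lemma backward arc}, $C$ contains a backward arc $(u,v)$, and $c(u)=c(v)$ forces $l(P_u)\equiv l(P_v)\pmod{k}$. By Lemma \ref{lemma paths between relatives}, $T$ contains a (unique) $vu$-path $P$ of length $l(P_u)-l(P_v)$, which is therefore a positive multiple of $k$ (and hence at least $k$). Closing $P$ with the arc $(u,v)$ produces an elementary cycle $C'\subseteq D$ of length $l(P_u)-l(P_v)+1\equiv 1\pmod{k}$.

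For $r=1$ the existence of $C'$ immediately contradicts the hypothesis, so the coloring $c$ is acyclic and $\chi_A(D)\le k$, settling that case. For general $r$ the above only produces a cycle of length $1\pmod{k}$, and one must work harder: the plan is to use strong connectivity to append auxiliary paths. For each vertex $v'$ of the tree path $P$ between $v$ and $u$, strong connectivity supplies a $u$-to-$v'$ walk in $D$, and concatenating such a walk with the subpath of $P$ from $v'$ to $u$ produces a family of cycles whose lengths vary modulo $k$ as $v'$ ranges over $P$; exploiting the several backward arcs on $C$ together with these adjustments, one aims to hit the residue $r$ modulo $k$ and reach a contradiction.

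The main obstacle is precisely this promotion of the $1\pmod{k}$ construction to an arbitrary residue $r\pmod{k}$: the DFS tree canonically produces elementary cycles of length $1\pmod{k}$, so forcing a cycle of length $r\pmod{k}$ requires a non-local combination of paths in $D$, and verifying that the modular arithmetic consistently produces the forbidden residue is the delicate combinatorial step.
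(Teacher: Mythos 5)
There is a genuine gap. Your argument is complete only for \(r=1\): the DFS machinery (Lemma \ref{lemma backward arc} plus Lemma \ref{lemma paths between relatives}) canonically manufactures an \emph{elementary} cycle of length \(\equiv 1 \pmod{k}\) from a monochromatic backward arc, and that is exactly the content of the paper's Lemma \ref{lemma no cycles of length 1 modulo k}. For general \(r\) you do not give a proof; you state a plan ("append auxiliary paths\ldots{} aims to hit the residue \(r\)") and then correctly identify that carrying it out is the delicate step --- but that delicate step \emph{is} the theorem. The plan itself is also doubtful as stated: strong connectivity gives you a \(u\)-to-\(v'\) walk of completely uncontrolled length modulo \(k\), and concatenating it with a subpath of \(P\) yields a closed walk rather than a cycle, so extracting an actual cycle changes the length unpredictably and you cannot simply tune the residue. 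A further warning sign that no soft argument of this shape can work is the undirected analogue quoted in the introduction: for \(r=2\) an undirected graph with no cycle of length \(r\) modulo \(k\) may need \(k+1\) colors, so any correct proof of Theorem \ref{dichro theo 01} must exploit the directed structure more substantially than your sketch does.

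For context, the paper does not prove this statement at all: it is imported verbatim from \cite{Chen2015210} and used as a black box (e.g.\ in Corollary \ref{corollary dichro theo 01}), precisely because the authors' backward-arc technique only controls the residue \(1\) modulo \(k\). So your proposal reproduces the part of the argument the paper already has (the \(r=1\) case) and leaves open the part the paper deliberately outsources. To close the gap you would need the actual argument of Chen, Ma and Zang, which is a genuinely different and more involved induction, not a local modification of the elementary-cycle construction.
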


\begin{corollary}
\label{corollary dichro theo 01}
Let $D$ be a digraph with girth $g$ and circumference $c$ with 
\begin{math} g-1\leq c-g+2 \end{math} 
\begin{math} (g\leq \frac{c+3}{2}). \end{math}
Then 
\begin{math} \chi_A(D) \leq c-g+2. \end{math}
\end{corollary}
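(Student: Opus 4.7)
The plan is to deduce this directly from Theorem~\ref{dichro theo 01} by choosing the parameters $k = c - g + 2$ and $r = g - 1$.

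First I would check that these values satisfy the hypotheses of Theorem~\ref{dichro theo 01}. Since $D$ is loopless and has at least one cycle, the girth satisfies $g \geq 2$, so $r = g - 1 \geq 1$. From $g \leq c$ we get $k = c - g + 2 \geq 2$, and the corollary's hypothesis $g - 1 \leq c - g + 2$ is precisely $r \leq k$. Thus $k \geq 2$ and $1 \leq r \leq k$, as required.

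The main step is to verify that $D$ contains no cycle of length $r = g-1$ modulo $k = c - g + 2$. Since every cycle of $D$ has length in $\{g, g+1, \ldots, c\}$, it is enough to show that no element of this set is congruent to $g - 1$ modulo $c - g + 2$. Suppose $\ell \equiv g - 1 \pmod{k}$ with $g \leq \ell \leq c$; then $\ell = g - 1 + jk$ for some integer $j \geq 0$. The case $j = 0$ would give $\ell = g - 1 < g$, contradicting $\ell \geq g$, and the case $j \geq 1$ would give
\begin{math} \ell \geq g - 1 + (c - g + 2) = c + 1 > c, \end{math}
contradicting $\ell \leq c$. Hence no such cycle exists.

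Applying Theorem~\ref{dichro theo 01} then yields $\chi_A(D) \leq k = c - g + 2$. There is no real obstacle here: the corollary is essentially a pigeonhole observation, namely that the $c - g + 1$ consecutive integers $g, g+1, \ldots, c$ cannot cover all $c - g + 2$ residues modulo $c - g + 2$, and the hypothesis $g - 1 \leq c - g + 2$ is precisely what makes $g - 1$ a legal choice for the residue parameter in Theorem~\ref{dichro theo 01}.
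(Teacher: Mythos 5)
Your proof is correct and follows essentially the same route as the paper: take $k=c-g+2$, $r=g-1$, show that no cycle length in $\{g,\ldots,c\}$ can be congruent to $g-1$ modulo $c-g+2$ (the residue would force length $g-1$ or at least $c+1$), and invoke Theorem~\ref{dichro theo 01}. Your explicit verification that $k\geq 2$ and $1\leq r\leq k$ is a small addition the paper leaves implicit, but the argument is the same.
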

\begin{proof}
Note that $D$ has no cycle of length $g-1$ modulo $c-g+2$. Suppose, by  contradiction, that $C$ is a cycle in $D$ such that 
\begin{math} l(C)\equiv g-1 \pmod{c-g+2} \end{math} 
then 
\begin{math} l(C) = g-1 + q(c-g+2) \end{math} 
for some integer $q\geq 0$. If $q=0$ then  $l(C)=g-1$ and this is impossible for a digraph of girth $g$. Therefore, $q\geq 1$ and so 
\begin{math} l(C)= g-1 + q(c-g+2)\geq g-1 + c-g+2=c+1, \end{math} 
which is a contradiction.

Now, since $D$ has no cycle of length $g-1$ modulo $c-g+2$, by Theorem \ref{dichro theo 01} we have 
\begin{math} \chi_A(D)\leq c-g+2. \end{math}
\end{proof}

\begin{remark}
The bound proposed in corollary \ref{corollary dichro theo 01} is sharp.
For any digraph $D$ with at least one cycle 
\begin{math} \chi_A(D)\geq 2. \end{math} 
Now, take $D$ such that all its cycles have length 3, then 
\begin{math} \chi_A(D) \leq 2. \end{math} 
Thus, the bound is reached. 
\end{remark}

Now we will see a bound for $\chi_A(D)$ when all cycles in $D$ have length $r$ modulo $k$. First, let us prove next Lemma.
which is the directed version of Theorem \ref{tuza} and a particular case of Theorem \ref{dichro theo 01}.  

\begin{lemma}
\label{lemma no cycles of length 1 modulo k}
Let $k$ be an integer with $k\geq 2$. Let $D$ be a strongly connected digraph with no cycle of length $1$ modulo $k$; $T$ a DFS tree of length $t$ of $D$ rooted at $r$ and $V_0$, \ldots, $V_t$ its generations. The sets 
\begin{math} U_s=\bigcup_{i\equiv s\pmod{k}} V_i \end{math} 
for each $s\in [k]_0$ form a partition of $V$ into acyclic sets. 
\end{lemma}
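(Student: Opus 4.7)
The plan is to prove both assertions (partition and acyclicity) separately, with the first being an immediate consequence of Lemma \ref{lemma partition of V} and the second proceeding by contradiction via the elementary-cycle construction encoded in Lemmas \ref{lemma backward arc} and \ref{lemma paths between relatives}.

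First, for the partition statement I would invoke Lemma \ref{lemma partition of V}, which tells us that $V_0,\ldots,V_t$ already partition $V$. Since each index $i\in[t+1]_0$ satisfies $i\equiv s\pmod{k}$ for exactly one $s\in [k]_0$, the family $\{U_s\}_{s\in [k]_0}$ is obtained by grouping the $V_i$'s and hence also partitions $V$.

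The substantive part is showing each $D\langle U_s\rangle$ is acyclic. I would argue by contradiction: suppose some $U_s$ induces a cycle $C$ in $D$. By Lemma \ref{lemma backward arc}, $C$ contains at least one backward arc $(u,v)$ (with respect to $T$). Writing $u\in V_i$ and $v\in V_j$, Remark \ref{remark backward arc} gives $i>j$, and the hypothesis $u,v\in U_s$ forces $i\equiv s\equiv j\pmod{k}$, so $i-j\equiv 0\pmod{k}$. Now Lemma \ref{lemma paths between relatives} furnishes a (unique) $vu$-path $P$ in $T$ with $l(P)=l(P_u)-l(P_v)=i-j$. Adjoining the backward arc $(u,v)$ to $P$ produces a cycle $C'$ in $D$ of length $l(P)+1=i-j+1\equiv 1\pmod{k}$, contradicting the hypothesis that $D$ has no cycle whose length is $1$ modulo $k$.

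I do not anticipate a real obstacle here: all the heavy lifting has already been done in the preliminary section. The only point requiring a small amount of care is making sure one extracts the right cycle from the backward arc, namely the \emph{elementary} cycle formed by $P$ and $(u,v)$ rather than the original cycle $C$ itself; this is the cycle whose length can be read off directly from the generation indices and is therefore amenable to the congruence argument.
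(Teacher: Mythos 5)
Your proposal is correct and follows essentially the same route as the paper's own proof: both establish the partition directly from the grouping of the levels $V_i$ by residue class modulo $k$, and both derive the acyclicity by contradiction, extracting a backward arc via Lemma \ref{lemma backward arc}, using Remark \ref{remark backward arc} and Lemma \ref{lemma paths between relatives} to obtain the $vu$-path $P$ in $T$ with $l(P)=i-j\equiv 0\pmod{k}$, and concluding that the elementary cycle $P\cup(u,v)$ has length $1$ modulo $k$. Your closing remark about using the elementary cycle rather than $C$ itself is exactly the point the paper's argument turns on.
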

\begin{proof}
Take $U_s$ as in the hypothesis for each $s\in [k]_0$. 

First, 
\begin{math} \bigcup_{s=0}^{k-1} U_s = \bigcup_{i=0}^t V_i=V \end{math} 
and 
\begin{math} U_i\cap U_j=\emptyset \end{math} 
if $i\neq j$ where $i,j\in [k]_0$, by definition.

Second, proceeding by contradiction, assume that $U_s$ is not acyclic for some $s\in[k]_0$, then there is a cycle $C$ in $D$ such that $V(C)\subset U_s$. By Lemma \ref{lemma backward arc}, there is a backward arc $(u,v)\in A(C)$. As $u,v\in U_s$, we have 
\begin{math} l(P_u)=i\equiv s \pmod{k} \end{math} 
and 
\begin{math} l(P_v)=j\equiv s \pmod{k}, \end{math} 
by definition of the generations and the $U_s$' and $i>j$, by Remark \ref{remark backward arc}. Hence, by Lemma \ref{lemma paths between relatives}, there is a $vu$-path in $T$, namely $P$, with 
\begin{math} l(P)= i-j \equiv 0 \pmod{k} \end{math} 
and thus, the elementary cycle, $P\cup(u,v)$, has length 1 modulo $k$, a contradiction.
\end{proof}

In the following Lemma we will see that $g-1$ of consecutive levels in a DFS tree induce an acyclic digraph, where $g$ is the girth of $D$. This will be useful to construct acyclic colorings.

\begin{lemma}
\label{lemma g-1 consecutive generations acyclic}
Let $D$ be a strongly connected digraph with girth $g$; $T$ a DFS tree rooted at $r$ such that $T$ has length $t$ and $V_0$, \ldots, $V_t$ its generations. Then $g-1$ consecutive generations of $T$, $V_h$, $V_{h+1}$, \ldots, $V_{h+g-2}$, induce an acyclic subdigraph in $D$, namely 
\begin{math} D\langle \bigcup_{i=0}^{g-2} V_{h+i}\rangle, \end{math}
for each $0\leq h\leq t-g+2$. 
Moreover, there is no backward arc with both ends in 
\begin{math} \bigcup_{i=0}^{g-2} V_{h+i}. \end{math}
\end{lemma}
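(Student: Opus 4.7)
The plan is to prove the ``moreover'' statement first and then deduce acyclicity from it. Specifically, I would argue by contradiction that no backward arc can have both endpoints inside $\bigcup_{i=0}^{g-2} V_{h+i}$, and then use Lemma \ref{lemma backward arc} to conclude that the induced subdigraph has no cycle.

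For the ``moreover'' part, suppose $(u,v)\in A(D)$ is a backward arc with $u,v\in \bigcup_{i=0}^{g-2} V_{h+i}$. Let $u\in V_i$ and $v\in V_j$; by Remark \ref{remark backward arc}, $i>j$, and the hypothesis gives $h\leq j<i\leq h+g-2$. Now apply Lemma \ref{lemma paths between relatives}: there is a $vu$-path $P$ in $T$ with $l(P_u)=l(P_v)+l(P)$, so $l(P)=i-j$. Then the elementary cycle $C=P\cup\{(u,v)\}$ has length $l(C)=(i-j)+1\leq (g-2)+1=g-1<g$, contradicting the assumption that the girth of $D$ equals $g$.

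For acyclicity, suppose for contradiction that $D\bigl\langle \bigcup_{i=0}^{g-2} V_{h+i}\bigr\rangle$ contains a cycle $C'$. By Lemma \ref{lemma backward arc}, $C'$ has at least one backward arc $(u',v')$, and since $C'$ lies inside $D\bigl\langle \bigcup_{i=0}^{g-2} V_{h+i}\bigr\rangle$, both $u'$ and $v'$ belong to $\bigcup_{i=0}^{g-2} V_{h+i}$. This contradicts what was just established in the previous paragraph, completing the proof.

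The argument is really just index arithmetic combined with earlier lemmas, so there is no serious obstacle. The one subtlety to watch is the range of $h$: the hypothesis $0\leq h\leq t-g+2$ guarantees that the indices $h,h+1,\ldots,h+g-2$ all lie in $[t+1]_0$, so the generations $V_{h+i}$ are well defined, but this is only used implicitly since the bound $i-j\leq g-2$ already forces $l(C)\leq g-1$ regardless.
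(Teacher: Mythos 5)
Your proof is correct and follows essentially the same route as the paper's: both arguments reduce to the observation that a backward arc with endpoints in $g-1$ consecutive generations would close an elementary cycle of length $i-j+1\leq g-1<g$ via Lemma \ref{lemma paths between relatives}, contradicting the girth. The only difference is cosmetic ordering — you establish the ``moreover'' part first and deduce acyclicity from it via Lemma \ref{lemma backward arc}, whereas the paper starts from a hypothetical cycle and extracts the backward arc; the underlying contradiction is identical.
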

\begin{proof}
Let $h\in [t-g+3]_0$.
Suppose by contradiction that there is a cycle $C$ in 
\begin{math} D\langle \bigcup_{i=0}^{g-2} V_{h+i} \rangle. \end{math}
By Lemma \ref{lemma backward arc}, $C$ contains a backward arc $(u,v)$. Then $u\in V_{h+i}$ and $v\in V_{h+j}$ for some 
\begin{math} 0\leq j < i \leq g-2. \end{math}
Observe that there is a $vu$-path, $P$, in $T$ such that 
\begin{math} C'=P\cup(u,v) \end{math} 
is an elementary cycle in $D$ and 
\begin{math} l(P_u)=l(P_v)+l(P), \end{math}
by Lemma \ref{lemma paths between relatives}. Hence, on the one hand \begin{math} l(P)=l(C')-1 \geq  g-1 \end{math} 
and on the other hand 
\begin{math} l(P)=l(P_u)-l(P_v)=(h+i)-(h+j)=i-j; \end{math}
therefore, $i-j\geq g-1$, which is a contradiction since 
\begin{math} 0\leq j < i \leq g-2 \end{math} 
and thus 
\begin{math} 0<i-j\leq g-2 < g-1. \end{math} 

In this way, 
\begin{math} D\langle \bigcup_{i=0}^{g-2} V_{h+i}\rangle \end{math} 
is acyclic and there is no backward arc with both ends in 
\begin{math}\bigcup_{i=0}^{g-2} V_{h+i}. \end{math}
\end{proof}

\begin{lemma}
\label{lemma partition acyclic sets}
Let $D$ be a strongly connected digraph with girth $g$, $T$ a DFS tree of $D$ rooted at $r$, length $t$ and generations $V_0$, \ldots, $V_t$; 
\begin{math} k=\lceil{\frac{t+1}{g-1}}\rceil; \end{math} 
and 
\begin{math} U_h = \bigcup_{j=0}^{g-2} V_{h(g-1)+j} \end{math} 
for each $h\in [k]_0$, where 
\begin{math} V_{(k-1)(g-1)+j}=\emptyset \end{math} 
whenever 
\begin{math} (k-1)(g-1)+j>t. \end{math}
Then $U_0$, \ldots, $U_{k-1}$ form a partition of $V$ into acyclic sets. Moreover, there is no backward arc $(u,v)$ in $D$ with both ends in the same $U_h$.  
\end{lemma}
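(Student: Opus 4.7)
Plan of proof. The statement has two conclusions: that $U_0,\ldots,U_{k-1}$ partition $V$, and that no backward arc has both ends in a common $U_h$. Since, by Lemma \ref{lemma backward arc}, every cycle contains a backward arc, the latter assertion automatically forces $D\langle U_h\rangle$ to be acyclic, so it suffices to establish these two facts.

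For the partition I would combine Lemma \ref{lemma partition of V} (which already gives $V=\bigcup_{i=0}^t V_i$ disjointly) with Euclidean division of the index $i$ by $g-1$. Every $i\in\{0,\ldots,t\}$ is written uniquely as $i=h(g-1)+j$ with $0\leq j\leq g-2$, and from $k=\lceil(t+1)/(g-1)\rceil$ one derives $(k-1)(g-1)<t+1\leq k(g-1)$, so $h=\lfloor i/(g-1)\rfloor\leq \lfloor t/(g-1)\rfloor\leq k-1$, placing $V_i$ in exactly one $U_h$. The convention $V_{(k-1)(g-1)+j}=\emptyset$ for overflowing indices only pads the last block and does not affect the partition.

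For the backward-arc statement I would treat the blocks $h\leq k-2$ and $h=k-1$ separately. For $h\leq k-2$, the inequality $(k-1)(g-1)\leq t$ gives $h(g-1)\leq (k-2)(g-1)\leq t-g+1\leq t-g+2$, so $U_h$ is exactly the union $\bigcup_{j=0}^{g-2}V_{h(g-1)+j}$ of $g-1$ consecutive full generations whose base level satisfies the hypothesis of Lemma \ref{lemma g-1 consecutive generations acyclic}; that lemma then gives both acyclicity of $U_h$ and the absence of internal backward arcs.

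The only real obstacle is the last block $U_{k-1}$, which when $(g-1)\nmid (t+1)$ contains strictly fewer than $g-1$ non-empty generations and falls outside the hypothesis of Lemma \ref{lemma g-1 consecutive generations acyclic}. Here I would simply replay the argument of that lemma: since $(k-1)(g-1)\leq t<k(g-1)$, the non-empty part of $U_{k-1}$ sits inside the consecutive levels $V_{(k-1)(g-1)},\ldots,V_t$, whose level-span is at most $t-(k-1)(g-1)\leq g-2$; then a backward arc $(u,v)$ with both ends in $U_{k-1}$ would yield, via Lemma \ref{lemma paths between relatives}, an elementary cycle of length $1+(l(P_u)-l(P_v))\leq 1+(g-2)=g-1<g$, contradicting the girth assumption. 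This rules out internal backward arcs in $U_{k-1}$, and together with Lemma \ref{lemma backward arc} also rules out cycles in $U_{k-1}$, completing the proof modulo the boundary bookkeeping noted above.
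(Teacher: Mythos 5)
Your proof is correct and follows essentially the same route as the paper: Euclidean division of the level index by $g-1$ (using $(k-1)(g-1)<t+1\leq k(g-1)$) for the partition, and Lemma \ref{lemma g-1 consecutive generations acyclic} for acyclicity and the absence of internal backward arcs. The only difference is that you explicitly replay that lemma's girth argument for the possibly short last block $U_{k-1}$, whereas the paper simply cites the lemma for every $h$; your extra care at that boundary is welcome but does not constitute a different approach.
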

\begin{proof}
First, as $V_0$, \ldots, $V_t$ form a partition of $V$ and 
\begin{math} t < \frac{(t+1)}{(g-1)}(g-1) \leq \lceil{\frac{t+1}{g-1}}\rceil(g-1)=k(g-1), \end{math}
it follows that each $i\in [t+1]_0$ satisfies 
\begin{math} i\leq t \leq k(g-1)-1=(k-1)(g-1)+g-2 \end{math} 
and so, by Euclidean algorithm, there exist unique $h\in[k]_0$ and $j\in[g-1]_0$ such that $i=h(g-1)+j$ and 
\begin{math} V_i\subseteq U_h \end{math} 
and by the uniqueness of $h$ we have  
\begin{math} U_h\cap U_{h'}=\emptyset \end{math} 
if $h\neq h'$.
Hence, the sets $U_0$,\ldots, $U_{k-1}$ form a partition of $V$.
Second, from Lemma \ref{lemma g-1 consecutive generations acyclic}, the induced subdigraph 
\begin{math} D\langle U_h \rangle = D\langle \bigcup_{j=0}^{g-2} V_{h(g-1)+j} \rangle \end{math} 
is acyclic for each $h\in [k]_0$.
The moreover part is consequence of Lemma \ref{lemma g-1 consecutive generations acyclic}.
\end{proof}

\begin{corollary}
Let $D$ be a strongly connected digraph with girth $g$, $T$ a DFS tree rooted and $r$ and $G^D$ the backward spine of $D$. Then the sets 
\begin{math} \{ V_{h(g-1)+j}\}_{j=0}^{g-2} \end{math} 
are independent in $G^D$ for each  $h\in [k]_0$, where 
\begin{math} V_{(k-1)(g-1)+j}=\emptyset \end{math}
whenever 
\begin{math} (k-1)(g-1)+j>t. \end{math}
\end{corollary}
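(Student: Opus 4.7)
The plan is to read the corollary as a direct bookkeeping consequence of the ``moreover'' clauses of Lemma \ref{lemma g-1 consecutive generations acyclic} and Lemma \ref{lemma partition acyclic sets}, after translating the conclusion of those lemmas through the definition of $G^D$.

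First, I would unwind what independence in $G^D$ means. By Definition \ref{def:backward spine}, two distinct vertices $V_a,V_b$ of $G^D$ (with, say, $a>b$) form an edge $V_aV_b \in E(G^D)$ precisely when there exists a backward arc $(u,v)\in A(D)$ with $u\in V_a$ and $v\in V_b$. Hence a set $\{V_{i_0},\ldots,V_{i_{g-2}}\}$ of levels is independent in $G^D$ iff, for every pair of distinct indices $i_a \neq i_b$ in that set, there is no backward arc of $D$ with one endpoint in $V_{i_a}$ and the other in $V_{i_b}$.

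Second, I would fix $h\in [k]_0$ and the block $U_h=\bigcup_{j=0}^{g-2}V_{h(g-1)+j}$. Take any two distinct $j_1,j_2\in[g-1]_0$ and set $i_1=h(g-1)+j_1$, $i_2=h(g-1)+j_2$; then $V_{i_1}\cup V_{i_2}\subseteq U_h$. The ``moreover'' part of Lemma \ref{lemma partition acyclic sets} (itself inherited from the ``moreover'' part of Lemma \ref{lemma g-1 consecutive generations acyclic}) guarantees that no backward arc of $D$ has both endpoints in $U_h$. In particular, no backward arc goes between $V_{i_1}$ and $V_{i_2}$, so by the translation above $V_{i_1}V_{i_2}\notin E(G^D)$. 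Since $j_1,j_2$ were arbitrary, the family $\{V_{h(g-1)+j}\}_{j=0}^{g-2}$ is independent in $G^D$.

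The only mild subtlety — and the closest thing to an obstacle — is the boundary case $h=k-1$, where some of the indices $(k-1)(g-1)+j$ may exceed $t$. The stated convention sets $V_{(k-1)(g-1)+j}=\emptyset$ in that situation; since an empty level contains no vertices, it cannot be an endpoint of any arc of $D$ and is therefore not incident to any edge of $G^D$, so it trivially contributes no adjacency within the block. Thus the argument carries through uniformly for every $h\in[k]_0$, and no further work is required.
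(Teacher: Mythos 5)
Your proof is correct and follows essentially the same route as the paper: both invoke the ``moreover'' part of Lemma \ref{lemma partition acyclic sets} (no backward arc with both ends in the same $U_h$) and then translate this into the absence of edges of $G^D$ between the levels composing $U_h$. Your extra remark about the empty levels in the boundary block $h=k-1$ is a harmless clarification the paper leaves implicit.
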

\begin{proof}
From Lemma \ref{lemma partition acyclic sets}, we have that the sets 
\begin{math} U_h = \bigcup_{j=0}^{g-2} V_{h(g-1)+j} \end{math} 
are acyclic for each $h\in [k]_0$, where 
\begin{math} V_{(k-1)(g-1)+j} = \emptyset \end{math} 
whenever 
\begin{math} (k-1)(g-1)+j>t; \end{math} 
and by Lemma \ref{lemma partition acyclic sets}, there is no backward arc with both ends in 
\begin{math} U_h = \bigcup_{j=0}^{g-2} V_{h(g-1)+j} \end{math} 
and thus, by the definition of backward spine, there is no edge with both ends in 
\begin{math} \{ V_{h(g-1)+j}\}_{j=0}^{g-2} \end{math} 
for each  $h\in [k]_0$. 
\end{proof}

Next proposition gives a bound for $\chi_A(D)$, that may no be very good for an arbitrary digraph, but it is better if $D$ is strong and the ratio between the length of a longest path and the girth is not too large.

\begin{proposition}
\label{proposition path/girth}
Let $D$ be a strongly connected digraph. Let $g$ be the girth of $D$ and $l$ be an integer defined as
\begin{math} l=\min_{u\in V}\{j\ |\ j \text{ is the length of a longest path starting at } u\}. \end{math} 
Then 
\begin{math} \chi_A(D)\leq \lceil{\frac{l+1}{g-1}}\rceil. \end{math}
\end{proposition}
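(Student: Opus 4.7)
The plan is to exploit the freedom to choose the root of the DFS tree. By \textbf{Lemma \ref{lemma partition acyclic sets}}, if $T$ is any DFS tree of $D$ of length $t$, then $V$ admits a partition into $\lceil (t+1)/(g-1) \rceil$ acyclic sets, and hence $\chi_A(D)\leq \lceil (t+1)/(g-1) \rceil$. So the problem reduces to producing a DFS tree whose length $t$ is at most $l$.

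First I would pick a vertex $u^{\ast}\in V$ realizing the minimum in the definition of $l$; that is, $u^{\ast}$ has the property that the longest path in $D$ starting at $u^{\ast}$ has length exactly $l$, and no vertex of $D$ has a smaller ``longest path starting at it''. Since $D$ is strongly connected (so DFS from any vertex reaches every vertex), I may build a DFS tree $T$ rooted at $u^{\ast}$; by construction $T$ is an out-branching of $D$ with root $u^{\ast}$.

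Next I would observe that the length $t$ of $T$ — the length of a longest path in $T$, which by Remark \ref{DFS 2} is the length of a longest $u^{\ast}v$-path in $T$ — satisfies $t\leq l$. Indeed, every path in $T$ starting at $u^{\ast}$ is, in particular, a directed path in $D$ starting at $u^{\ast}$, and any such path has length at most $l$ by the choice of $u^{\ast}$. Hence $t\leq l$, so $t+1\leq l+1$, and therefore
\begin{equation*}
\left\lceil \frac{t+1}{g-1}\right\rceil \leq \left\lceil \frac{l+1}{g-1}\right\rceil.
\end{equation*}

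Finally I would apply Lemma \ref{lemma partition acyclic sets} to $T$: the sets $U_0,\ldots,U_{k-1}$ with $k=\lceil (t+1)/(g-1)\rceil$ form a partition of $V$ into acyclic sets, which yields an acyclic $k$-coloring of $D$. Combined with the inequality above, this gives $\chi_A(D)\leq \lceil (l+1)/(g-1)\rceil$, as required. There is no real obstacle here; the only subtlety is the choice of root, and verifying that $t\leq l$ is immediate from the definition of $l$ together with the fact that paths in $T$ starting at the root are paths in $D$ starting at that same vertex.
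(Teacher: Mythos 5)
Your proof is correct and follows essentially the same route as the paper: root a DFS tree at a vertex realizing the minimum $l$, note that the tree's length $t$ satisfies $t\leq l$ since tree paths from the root are paths in $D$, and apply Lemma \ref{lemma partition acyclic sets}. The only cosmetic difference is that you partition into $\lceil (t+1)/(g-1)\rceil$ sets and then compare ceilings, while the paper partitions directly into $\lceil (l+1)/(g-1)\rceil$ sets; both are valid.
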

\begin{proof}
Let $r$ be a vertex in $D$ such that  the length of a longest path starting at $r$ in $D$ equals $l$ and let $T$ be a DFS tree of $D$ rooted in $r$, then $T$ has length $t\leq l$. Let $V_0$, \ldots, $V_t$ be the generations of $T$.

Divide the vertex set $V$ in 
\begin{math} k = \lceil{\frac{l+1}{g-1}}\rceil \end{math} 
subsets as in Lemma \ref{lemma partition acyclic sets}, this is 
\begin{math} U_h = \bigcup_{j=0}^{g-2} V_{h(g-1)+j} \end{math} 
for each $h\in [k]_0$, where 
\begin{math} V_{(k-1)(g-1)+j}=\emptyset \end{math} 
whenever 
\begin{math} (k-1)(g-1)+j>t. \end{math}

Then 
\begin{math} c\colon V\to [k]_0 \end{math} 
given by $c(v)=h$ whenever $v\in U_h$ is a $k$-coloring in which each color class is acyclic, by Lemma \ref{lemma partition acyclic sets}. Therefore 
\begin{math} \chi_A(D)\leq k. \end{math}
\end{proof}

\begin{corollary}
Let $D$ be a strongly connected digraph of order $n$ and girth $g$.
Then 
\begin{math} \chi_A(D)\leq \lceil{\frac{n}{g-1}}\rceil. \end{math}
\end{corollary}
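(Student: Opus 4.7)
The plan is to derive this corollary as a direct specialization of Proposition \ref{proposition path/girth}. That proposition already gives the bound $\chi_A(D)\leq \lceil (l+1)/(g-1)\rceil$, where $l$ is the minimum over vertices $u\in V$ of the length of a longest directed path starting at $u$. So all I need to do is replace $l+1$ by $n$ via a crude (but always valid) upper bound on $l$.

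First I would observe that any directed path in $D$ consists of pairwise distinct vertices, so its length is at most $|V(D)|-1=n-1$. In particular, for every vertex $u\in V$, the length of a longest directed path starting at $u$ is at most $n-1$. Taking the minimum over $u$, this gives $l\leq n-1$, equivalently $l+1\leq n$. Since the function $x\mapsto \lceil x/(g-1)\rceil$ is monotone nondecreasing in $x$, we obtain
\begin{math}
\lceil (l+1)/(g-1)\rceil \leq \lceil n/(g-1)\rceil.
\end{math}

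Chaining this with Proposition \ref{proposition path/girth} yields $\chi_A(D)\leq \lceil (l+1)/(g-1)\rceil \leq \lceil n/(g-1)\rceil$, which is exactly the claimed bound. There is essentially no obstacle here: the real content of the argument lies in Lemma \ref{lemma partition acyclic sets} and Proposition \ref{proposition path/girth}, and this corollary is merely the observation that the $n$-vertex bound on path length makes the statement independent of the choice of root or of a specific DFS tree.
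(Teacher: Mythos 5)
Your proof is correct and follows the same route as the paper: both bound the length of a longest path by $n-1$ and then apply Proposition \ref{proposition path/girth}. Your version is slightly more explicit about the monotonicity of $x\mapsto \lceil x/(g-1)\rceil$, but there is no substantive difference.
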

\begin{proof}
The length of a longest path in $D$ is at most $n-1$. Then 
\begin{math} \chi_a(D)\leq \lceil{\frac{n}{g-1}}\rceil, \end{math} 
by Proposition \ref{proposition path/girth}.
\end{proof}

\begin{definition}
\label{def:condensation graph}
Let $D$, $T$ and $U_0$, \ldots, $U_{k-1}$ be as in Lemma \ref{lemma partition acyclic sets}. 
Define the \emph{condensation graph} of $D$ relative to $T$ as the graph, $G$, with vertex set 
\begin{math} V(G)=\{U_0,\ldots, U_{k-1}\} \end{math} 
and $U_iU_j\in E(G)$ whenever $i>j$ and there is a backward arc $(u,v)\in A(D)$ such that $u\in U_i$ and $v\in U_j$.
\end{definition}

\begin{proposition}
\label{proposition condensed graph}
Let $D$ be a strongly connected digraph, $T$ a DFS tree of $D$ rooted at $r$ and $G$ the condensation graph of $D$ relative to $T$. Then 
\begin{math} \chi_A(D)\leq \chi(G). \end{math}
\end{proposition}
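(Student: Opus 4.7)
The plan is to mimic the argument of Proposition \ref{proposition backward spine}, using the sets $U_0, \ldots, U_{k-1}$ from Lemma \ref{lemma partition acyclic sets} as the ``super-vertices'' in place of the generations $V_i$. First I would fix a proper coloring $c'\colon V(G)\to [\chi(G)]$ of the condensation graph, and define $c\colon V\to[\chi(G)]$ by $c(v)=c'(U_h)$ whenever $v\in U_h$. Since $U_0,\ldots,U_{k-1}$ partition $V$ by Lemma \ref{lemma partition acyclic sets}, the map $c$ is well defined.

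Next I would show that $c$ is an acyclic coloring. Take any cycle $C$ in $D$. By Lemma \ref{lemma backward arc}, $C$ contains a backward arc $(u,v)$, and Lemma \ref{lemma partition acyclic sets} guarantees that $u$ and $v$ do not lie in the same $U_h$; say $u\in U_a$ and $v\in U_b$ with $a\neq b$. The small verification I need is that in fact $a>b$, so that $U_aU_b\in E(G)$. This follows by Remark \ref{remark backward arc}: writing $u\in V_i\subseteq U_a$ and $v\in V_j\subseteq U_b$, we have $i>j$, and $i=a(g-1)+j_1$, $j=b(g-1)+j_2$ with $j_1,j_2\in[g-1]_0$; if $a<b$, then $i\le a(g-1)+g-2<(a+1)(g-1)\le b(g-1)\le j$, contradicting $i>j$.

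Once $U_aU_b\in E(G)$ is established, the hypothesis that $c'$ is proper gives $c'(U_a)\neq c'(U_b)$, hence $c(u)\neq c(v)$, so the backward arc $(u,v)$ is not monochromatic, and neither is $C$. Since every cycle of $D$ contains some non-monochromatic arc, each color class $c^{-1}(i)$ induces an acyclic subdigraph of $D$, and therefore $\chi_A(D)\le\chi(G)$.

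The main obstacle is essentially bookkeeping: making sure that the partition furnished by Lemma \ref{lemma partition acyclic sets} behaves correctly with respect to the ordering of indices of backward arcs, so that the presence of a backward arc between two distinct $U_h$'s really does correspond to an edge of $G$ in the sense of Definition \ref{def:condensation graph}. Apart from this indexing check, the rest of the proof is a direct adaptation of Proposition \ref{proposition backward spine}.
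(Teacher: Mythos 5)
Your proposal is correct and follows essentially the same route as the paper: lift a proper coloring of the condensation graph to $V$ via the partition $U_0,\ldots,U_{k-1}$, then use Lemma \ref{lemma backward arc} together with the ``moreover'' part of Lemma \ref{lemma partition acyclic sets} to find a non-monochromatic backward arc in every cycle. Your extra check that $a>b$ (so that the backward arc really yields an edge of $G$) is a detail the paper asserts without proof, and your verification of it is sound.
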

\begin{proof}
Let $d=\chi(G)$ and 
\begin{math} c\colon V(G)\to [d] \end{math} 
a proper coloring of $G$. Define the coloring 
\begin{math} c'\colon V\to [d] \end{math} 
in $D$ as follows $c'(v)=c(U_h)$ whenever $v\in U_h$.

As $U_0$, \ldots, $U_{k-1}$ is a partition of $V$, $c'$ is well defined and, by Lemma \ref{lemma partition acyclic sets}, each color class is acyclic since each backward arc $(u,v)\in A(D)$ satisfies $u\in U_i$, $v\in U_j$, $i>j$ and $U_iU_j\in E(G)$, hence 
\begin{math} c'(u)=c(U_i)\neq c(U_j)=c'(v), \end{math}
as $c$ is proper. Therefore, by Lemma \ref{lemma backward arc}, there is no monochromatic cycle in $D$.
\end{proof}

\section{Main results}
\label{sec:main results}

In this section we will prove our three main results: Theorems \ref{theo cycles of length r_i modulo k}, \ref{theo 03} and \ref{circumference and girth}. 


First we prove Theorem \ref{theo cycles of length r_i modulo k}.

\begin{proof}[of Theorem \ref{theo cycles of length r_i modulo k}]
Let $T$ be a DFS tree of $D$ rooted at $\hat{r}$ of length $t$ and $V_0$, \ldots, $V_t$ its generations. 

Since $r_i\neq 1$ for each $i\in[s]$ and all cycles in $D$ have length $r$ modulo $k$ for some 
\begin{math} r\in\{r_1\ldots,r_s\}, \end{math} 
$D$ has no cycle of length 1 modulo $k$, by Lemma \ref{lemma no cycles of length 1 modulo k} the sets 
\begin{math} U_s=\bigcup_{i\equiv s\pmod{k}} V_i \end{math} 
for each $s\in [k]_0$ form a partition of $V$ into acyclic sets. 

Now, consider the graph $H$, with vertex set 
\begin{math} V(H)=\{U_1,U_2,\ldots,U_{k-1}\} \end{math} 
and $U_iU_j\in E(H)$ if and only if there are $u\in U_i$ and $v\in U_j$ such that either $(u,v)$ or $(v,u)$ is a backward arc in $D$. 
 
Let $U_iU_j\in E(H)$, then there are $u\in U_i$ and $v\in U_j$ such that 
\begin{math} l(P_u) \equiv i \pmod{k}, \end{math} 
\begin{math} l(P_v) \equiv j \pmod{k} \end{math} 
and there is a backward arc between $u$ and $v$. 
\begin{enumerate}[{Case }1:]
\item $(u,v)$ is a backward arc in $D$.
Hence, there is a $vu$-path $P$ in $T$, such that $P\cup(u,v)$ is an elementary cycle in $D$. As 
\begin{math} l(C)\equiv r_l \pmod{k} \end{math} 
for some $l\in[s]$, we have on the one hand 
\begin{math} l(P)\equiv r_l-1 \pmod{k} \end{math} 
and on the other hand 
\begin{math} l(P)=l(P_u)-l(P_v) \equiv i-j \pmod{k}; \end{math} 
and thus 
\begin{math} i-j\equiv r_l-1 \pmod{k}, \end{math} 
or equivalently 
\begin{math} i\equiv j+r_l-1 \pmod{k} \end{math} 
where 
\begin{math} i,j,r_l \in[k]_0. \end{math} 
\item $(v,u)$ is a backward arc in $D$.
Hence, there is a $uv$-path $P'$ in $T$, such that $P'\cup(v,u)$ is an elementary cycle in $D$. As 
\begin{math} l(C)\equiv r_{l'} \pmod{k} \end{math} 
for some $l'\in[s]$, we have on the one hand 
\begin{math} l(P')\equiv r_{l'}-1 \pmod{k} \end{math} 
and on the other hand 
\begin{math} l(P')=l(P_v)-l(P_u) \equiv j-i \pmod{k}; \end{math} 
and thus 
\begin{math} j-i\equiv r_{l'}-1 \pmod{k}, \end{math} 
or equivalently 
\begin{math} i\equiv j-(r_{l'}-1) \pmod{k} \end{math} 
where 
\begin{math} i,j,r_h\in[k]_0. \end{math}
\end{enumerate}
In this way, $U_i$ is adjacent to $U_j$ in $H$ if and only if 
\begin{math} i\equiv j+(r_l-1) \pmod{k} \end{math} 
or 
\begin{math} i\equiv j-(r_l-1) \pmod{k} \end{math} 
for some $l\in [s]$. 

Since there are no backward arcs with both ends in the same $U_j$ or otherwise $D$ would have a cycle of length 1 modulo $k$, we have $H$ has no loop and the neighborhood of a vertex $U_j$ in $H$ satisfies 
\begin{math} N_H(U_j)\subset \bigcup_{l=1}^s\{U_{j-(r_l-1)},U_{j+{r_l-1}}\}, \end{math} 
where the subscripts are taken modulo $k$; and thus 
\begin{math} d_H(U_j)\leq 2s \end{math} 
for each 
\begin{math} 0\leq j\leq k-1. \end{math}
Therefore, the maximum degree in $H$ satisfies $\Delta(H)\leq 2s$. By Theorem \ref{brooks' theorem}, $H$ is $(2s+1)$-colorable.

Let 
\begin{math} c'\colon V(H) \to [2s+1] \end{math} 
a proper coloring in $H$, then 
\begin{math} c\colon V \to [2s+1] \end{math} 
defined as $c(v)=c'(U_j)$ whenever $v\in U_j$ is an acyclic coloring of $D$. 

The mapping $c$ assigns a unique color to each vertex $v\in V$, as the sets $U_1$, \ldots, $U_{k-1}$ form a partition of $V$, so is well defined.

The mapping $c$ is acyclic: let $C$ be an arbitrary cycle in $D$ and $(u,v)\in A(C)$ be a backward arc, then there are different subscripts $i,j \in [k]_0$ such that $u\in U_i$, $v\in U_j$ and thus $U_iU_j \in E(H)$, in this way 
\begin{math} c(u)=c'(U_i)\neq c'(U_j)=c(v) \end{math} 
as $c'$ is proper. Hence, $C$ is not monochromatic.

Therefore, 
\begin{math} \chi_A(D)\leq 2s+1. \end{math}
\end{proof}

If we restrict $s$ in last theorem, then we improve the result in Theorem \ref{dichro theo 01}.

\begin{corollary}
\label{corollary cycles of length r_i modulo k}
Let $k$ and $s$ be two integers such that 
\begin{math} s < \lfloor{\frac{k}{2}}\rfloor; \end{math} 
let $r_1$, \ldots, $r_s$ be $s$ integers such that 
\begin{math} k\geq r_i\geq 0 \end{math} 
for each $i\in[s]$ and $D$ a strongly connected digraph such that all cycles in $D$ have length $r$ modulo $k$ for some 
\begin{math} r\in\{r_1,\ldots,r_s\}. \end{math} 
If $r_i\neq 1$ for each $i\in[s]$, then 
\begin{math} \chi_A(D)\leq 2s+1 < k. \end{math}
\end{corollary}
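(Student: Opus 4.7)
The plan is to observe that this is an immediate consequence of Theorem \ref{theo cycles of length r_i modulo k} combined with a short arithmetic verification. Since the hypotheses on $k$, $s$, $r_1,\ldots,r_s$, and $D$ in the corollary are exactly those required by Theorem \ref{theo cycles of length r_i modulo k} (with the extra assumption $s<\lfloor k/2\rfloor$), I would begin by directly invoking that theorem to obtain the bound $\chi_A(D)\leq 2s+1$.

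All that remains is to check the strict inequality $2s+1<k$. From $s<\lfloor k/2\rfloor$ one has $s\leq \lfloor k/2\rfloor -1$, so
\begin{math} 2s+1\leq 2\lfloor k/2\rfloor -1\leq k-1<k, \end{math}
using $2\lfloor k/2\rfloor\leq k$. Concatenating the two inequalities yields $\chi_A(D)\leq 2s+1<k$, as required.

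There is no real obstacle here; the only mild care needed is to handle both parities of $k$ uniformly via the $\lfloor\cdot\rfloor$ inequality $2\lfloor k/2\rfloor\leq k$ (equality when $k$ is even, strict inequality when $k$ is odd). The plan is therefore to write a two-line proof: cite Theorem \ref{theo cycles of length r_i modulo k} for the bound $\chi_A(D)\leq 2s+1$, and then use $s<\lfloor k/2\rfloor$ to conclude $2s+1<k$.
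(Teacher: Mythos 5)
Your proposal is correct and matches the paper's intent exactly: the paper states this corollary as an immediate consequence of Theorem \ref{theo cycles of length r_i modulo k} with no written proof, and your arithmetic check that $s<\lfloor k/2\rfloor$ forces $2s+1\leq 2\lfloor k/2\rfloor-1\leq k-1<k$ is the only detail that needed verifying.
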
 

\begin{corollary}
\label{corollary cycles of length r modulo k}
Let $k$ and $r$ be two integers such that 
\begin{math} k\geq r\geq 0 \end{math} 
and let $D$ be a strongly connected digraph such that every cycle in $D$ has length  $r$ modulo $k$. If $r\neq 1$, then 
\begin{math} \chi_A(D)\leq 3. \end{math}
\end{corollary}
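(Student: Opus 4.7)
The plan is to recognize that Corollary \ref{corollary cycles of length r modulo k} is nothing more than the special case $s=1$ of Theorem \ref{theo cycles of length r_i modulo k}, which has already been proved. First I would set $s := 1$ and introduce the singleton $\{r_1\} := \{r\}$. The hypotheses of the theorem then become: $s \geq 1$ (holds since $s=1$), $k \geq r_1 \geq 0$ (equivalent to $k \geq r \geq 0$, which is given), $r_1 \neq 1$ (equivalent to $r \neq 1$, which is given), and the structural assumption that every cycle of $D$ has length $r'$ modulo $k$ for some $r' \in \{r_1, \ldots, r_s\} = \{r\}$ (exactly the assumption that every cycle has length $r$ modulo $k$).

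Having matched all hypotheses, I would invoke Theorem \ref{theo cycles of length r_i modulo k} to obtain
\begin{equation*}
\chi_A(D) \leq 2s + 1 = 2(1) + 1 = 3,
\end{equation*}
which is the desired bound. Because the argument is a direct specialization, there is no real obstacle; the only point to verify is the correct identification of the parameters and that the singleton family $\{r\}$ meets the three conditions imposed on the family $\{r_1, \ldots, r_s\}$ in the theorem. In particular, the assumption $r \neq 1$ is essential, since it ensures (via Lemma \ref{lemma no cycles of length 1 modulo k} used in the proof of the theorem) that $D$ contains no cycle of length $1$ modulo $k$, the property that produced the auxiliary graph $H$ of maximum degree $\leq 2s = 2$ to which Brooks' theorem was applied.
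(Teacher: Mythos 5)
Your proposal is correct and is exactly the argument the paper intends: the corollary is stated without a separate proof precisely because it is the case $s=1$, $r_1=r$ of Theorem \ref{theo cycles of length r_i modulo k}, and your verification of the hypotheses and the computation $2s+1=3$ match that reading.
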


Suppose $k$ and $r$ satisfy the hypothesis of corollary \ref{corollary cycles of length r modulo k} and take the worst case scenario in the proof of theorem \ref{theo cycles of length r_i modulo k}, where 
\begin{math} N_{H}(U_j)=\{U_{j-(r-1)}, U_{j+r-1}\} \end{math} 
for all $0\leq j\leq k-1$, where the subscripts are taken modulo $k$. Then $H$ is a union of 
\begin{math} d=\gcd(r-1,k) \end{math} 
pairwise disjoint cycles of length 
\begin{math} b=\frac{k}{d} \end{math} 
of the form 
\begin{math} C_j=U_jU_{j+r-1}U_{j+2(r-1)}\cdots U_{j+(b-1)(r-1)}U_j \end{math} 
for all $0\leq j< r-1$.**\\
If $b$ is even then, by Theorem \ref{brooks' theorem}, $H$ must be 2-colorable. 

** To clarify the statement, consider the bijective mapping 
\begin{math} \phi\colon V(H)\to \mathbb{Z}_k \end{math} 
given by 
\begin{math} U_j\mapsto \bar{j}, \end{math} 
where $\bar{n}$ is the number in $[k]_0$ such that 
\begin{math} n\equiv \bar{n} \pmod{k}. \end{math}

We know that all the backward arcs with head in $U_j$ have their tail in $U_{j+r-1}$, by Case 1 in the proof of Theorem \ref{theo cycles of length r_i modulo k} and all back arcs having tail in $U_j$ have their head in $U_{j-(r-1)}$, by Case 2 in the proof of Theorem \ref{theo cycles of length r_i modulo k}. Thus, we jump in the opposite direction of the arcs $r-1$ indices, from $U_j$ to $U_{j+r-1}$. If we consider the same jumps in $\mathbb{Z}_k$ we get the automorphism $\psi$ in $\mathbb{Z}_k$ given by 
\begin{math} \psi(\bar{n})=\overline{n+(r-1)}. \end{math}
In this way, $U_j$ and $U_l$ are adjacent in $H$ if and only if either 
\begin{math} \psi(\bar{j})=\bar{l} \end{math} 
or 
\begin{math} \psi(\bar{l})=\bar{j}. \end{math}

Notice that, there are $d$ orbits of $\psi$ which are pairwise disjoint, cyclic and have size $t$ and each orbit corresponds to precisely one cycle in $H$. The relationship between $H$ and the automorphism $\psi$ in $\mathbb{Z}_k$ is now clear. \\
Thus, determining a group of the same order as our graph (digraph) and an automorphism on it which represents the adjacencies may help us to color, via its orbits that are always cyclic and mutually disjoint, the vertices of the graph (digraph). However, if the graph (digraph) has more edges (arcs, resp.) than those in the cycles determined by the orbits, this tool may be useless.

\begin{corollary}
Let $k$ and $r$ be two integers such that $k\geq r\geq 0$  and let $D$ be a strongly connected digraph such that every cycle in $D$ has length congruent with $r$ modulo $k$. If $r\neq 1$ and $\frac{k}{d}$ is even, where 
\begin{math} d=\gcd(r-1,k), \end{math} 
then 
\begin{math} \chi_A(D)\leq 2. \end{math} 
\end{corollary}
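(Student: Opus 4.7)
The plan is to specialize the proof of Theorem~\ref{theo cycles of length r_i modulo k} to the case $s=1$, $r_1=r$, and then sharpen the upper bound on the chromatic number of the auxiliary graph $H$ built there by exploiting the coset structure of $\mathbb{Z}_k$ when $k/d$ is even.

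First I would fix a DFS tree $T$ of $D$ with generations $V_0,\ldots,V_t$. Since $r\neq 1$ and every cycle of $D$ has length $r$ modulo $k$, $D$ contains no cycle of length $1$ modulo $k$, so Lemma~\ref{lemma no cycles of length 1 modulo k} supplies the acyclic partition $U_s=\bigcup_{i\equiv s\pmod k}V_i$ for $s\in[k]_0$. Following the construction in the proof of Theorem~\ref{theo cycles of length r_i modulo k}, I would then form the graph $H$ on vertex set $\{U_0,\ldots,U_{k-1}\}$ with $U_iU_j\in E(H)$ whenever some backward arc of $D$ joins $U_i$ to $U_j$; the Case~1/Case~2 analysis there, applied with a single residue $r$, shows that every edge of $H$ satisfies $i\equiv j\pm(r-1)\pmod k$. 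In particular $H$ is a subgraph of the Cayley graph on $\mathbb{Z}_k$ with connection set $\{\pm(r-1)\}$.

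The heart of the argument is to observe that $H$ is bipartite. As pointed out in the paragraph marked $\ast\ast$ just before the corollary, the connected components of this Cayley graph are precisely the cosets of the cyclic subgroup $\langle r-1\rangle\leq\mathbb{Z}_k$; each coset has size $b=k/\gcd(r-1,k)=k/d$, so every component is a cycle of length $b$ (an edge when $b=2$). Since $b$ is assumed even, every such component is bipartite, and hence so is $H$; in particular $\chi(H)\leq 2$.

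Finally, I would lift a proper $2$-coloring $c'\colon V(H)\to[2]$ to the coloring $c\colon V(D)\to[2]$ defined by $c(v)=c'(U_j)$ whenever $v\in U_j$; this is well-defined because the $U_j$'s partition $V$. By Lemma~\ref{lemma backward arc}, any cycle of $D$ contains a backward arc $(u,v)$, whose endpoints lie in distinct classes $U_i,U_j$ with $U_iU_j\in E(H)$, so $c(u)\neq c(v)$ and no cycle of $D$ is monochromatic. Thus $\chi_A(D)\leq 2$. The only nontrivial step is the bipartiteness observation in the previous paragraph; the rest is a direct specialization of the construction already used for Theorem~\ref{theo cycles of length r_i modulo k}.
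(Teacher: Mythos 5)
Your proposal is correct and follows essentially the same route as the paper: both specialize the construction of the auxiliary graph $H$ from the proof of Theorem~\ref{theo cycles of length r_i modulo k} to a single residue $r$, identify $H$ (via the discussion marked $\ast\ast$) with a subgraph of a disjoint union of $d=\gcd(r-1,k)$ cycles of even length $b=k/d$, and conclude that $H$ is $2$-colorable, which lifts to an acyclic $2$-coloring of $D$. Your phrasing in terms of the Cayley graph on $\mathbb{Z}_k$ with connection set $\{\pm(r-1)\}$ and its cosets is just a cleaner restatement of the paper's orbit argument for the automorphism $\psi$.
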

\begin{proof}
This follows from the proof of Theorem \ref{theo cycles of length r_i modulo k} and the explanation in **, noticing that the orbits of $\phi$ as cycles in $H$ have even length 
\begin{math} b=\frac{k}{d}. \end{math}
Thus they are 2-colorable and we may color the vertices of $D$ with 2 colors as in the proof of Theorem \ref{theo cycles of length r_i modulo k}.
\end{proof}

Next we will prove Theorem \ref{theo 03}.


\begin{proof}[of Theorem \ref{theo 03}]
Let $T$ be a DFS tree of $D$ rooted at $\hat{r}$ of length $t$ and $V_0$, \ldots, $V_t$ its generations. 

Let 
\begin{math} k'= \lceil{ \frac{t+1}{p}}\rceil \end{math} 
and
\begin{math} U_{h} = \bigcup_{j=0}^{g-2} V_{hp+j} \end{math} 
for each $h\in [k']_0$, where 
\begin{math} V_{(k'-1)p+j}=\emptyset \end{math} 
whenever 
\begin{math}(k'-1)p+j>t. \end{math}
In Lemma \ref{lemma g-1 consecutive generations acyclic} we proved each $U_{h}$ is acyclic as it consists of $p\leq g-1$ consecutive generations and that there is no backward arc with both ends in the same $U_{h}$.

These sets form a partition of $V$, since the $V_i$'s form a partition of $V$ and by Euclidean algorithm each $i\in[t+1]_0$ can be written as 
\begin{math} i=hp+j \end{math} for unique $h\geq 0$ and 
\begin{math} 0\leq j \leq p-1, \end{math} 
and thus $V_i \subset U_{h'}$ and 
\begin{math} V_i\cap U_{h'}=\emptyset \end{math} 
whenever $h'\neq h$.

Define 
\begin{math} c\colon V \to [d]_0, \end{math}
where 
\begin{math} d = \lceil{\frac{k}{p}}\rceil=\frac{\hat{k}}{p}, \end{math}
as  follows: $c(v)=i$ whenever $v\in U_{h}$ and 
\begin{math} h \equiv i \pmod{d}. \end{math} 
Then $c$ is an acyclic coloring of $D$. 
First, as the sets $U_{h}$ form a partition of $V$, each vertex has exactly one color. 
Second, the color classes are acyclic. Suppose by contradiction that there is a monochromatic cycle $C$ in $D$. Then by Lemma \ref{lemma backward arc}, there is a monochromatic backward arc $(u,v)\in A(C)$. Let 
\begin{math} h_1, h_2\in [k']_0 \end{math} 
and $i\in [d]_0$ such that $u\in U_{h_1}$, $v\in U_{h_2}$ and 
\begin{math} h_1\equiv h_2\equiv i \pmod{d}. \end{math} 

As there is no backward arc with both ends in the same $U_{h}$, we have $h_1\neq h_2$.
By the definition of the $U_h$'s we have that $u\in V_{h_1 p+j_1}$ for some $j_1\in[p]_0$, $v\in V_{h_2 p+j_2}$ for some $j_2\in[p]_0$, 
\begin{math} l(P_u)=h_1p+j_1, \end{math} 
\begin{math} l(P_v)=h_2p+j_2 \end{math} 
and, as $u$ is a descendant of $v$, 
\begin{math} h_1p+j_1>h_2p+j_2, \end{math}
where 
\begin{math} h_1,h_2\in[k']_0, \end{math} 
\begin{math} j_1,j_2\in[p]_0 \end{math} 
and $h_1 > h_2$. 

Now, let $P$ be the $vu$-path in $T$. It follows that 
\begin{math} l(P)=l(P_u)-l(P_v)=(h_1p+j_1)-(h_2p+j_2)=(h_1-h_2)(g-1)+(j_1-j_2). \end{math} 
Since 
\begin{math} h_1 \equiv h_2 \pmod{d} \end{math} 
and $h_1>h_2$, it follows that 
\begin{math} h_1-h_2=qd \end{math} 
for some $q\geq 1$ and thus 
\begin{math} l(P)= (h_1-h_2)p+(j_1-j_2) = qdp + j_1-j_2 = q\hat{k} + j_1 - j_2 \equiv j_1-j_2 \pmod{\hat{k}}, \end{math} 
where 
\begin{math} |j_1-j_2|\leq p-1 \end{math} 
as 
\begin{math} j_1,j_2\in[p]_0. \end{math}

Therefore, the cycle 
\begin{math} \gamma=P\cup(u,v) \end{math} 
satisfies 
\begin{math} l(\gamma)=l(P)+1 \equiv j_1-j_2+1 \pmod{\hat{k}}, \end{math}
where 
\begin{math} - (p-1) +1 \leq j_1-j_2+1 \leq (p-1) + 1, \end{math}
or equivalently, 
\begin{math} -p+2 \leq j_1-j_2+1 \leq p, \end{math}
which contradicts the hypothesis.

Hence, $c$ is an acyclic coloring and 
\begin{math} \chi_A(D)\leq d. \end{math}
\end{proof}

As a consequence of Theorem \ref{theo 03} we have:

\begin{corollary}
Let $D$ be a strongly connected digraph with girth $g$; $k$ and $p$ be two integers such that 
\begin{math} k\geq g-1 \geq p \geq 1 \end{math} 
and $p \mid k$. If there is no cycle of length $r$ modulo $k$ in $D$ for each 
\begin{math} r\in \{-p+2,\ldots, 0,\ldots, p\}, \end{math} 
then 
\begin{math}\chi_A(D)\leq \frac{k}{p}. \end{math}
\end{corollary}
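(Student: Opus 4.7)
The statement is an immediate specialization of Theorem \ref{theo 03}, so the plan is simply to check that the hypotheses line up once we impose $p \mid k$. First, I would observe that when $p$ divides $k$, the ceiling in the definition of $\hat{k}$ collapses: $\lceil k/p \rceil = k/p$, hence $\hat{k} = \lceil k/p \rceil p = (k/p)\cdot p = k$. So the quantity ``modulo $\hat{k}$'' appearing in Theorem \ref{theo 03} becomes ``modulo $k$'' in this setting.

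Next I would check the inequalities $k \geq g-1 \geq p \geq 1$ are precisely the hypotheses of Theorem \ref{theo 03}, and they are assumed here verbatim. The only remaining hypothesis of Theorem \ref{theo 03} is the absence of cycles of length $r$ modulo $\hat{k}$ for $r \in \{-p+2,\ldots,0,\ldots,p\}$; under $p \mid k$ this is exactly the corollary's assumption that $D$ has no cycle of length $r$ modulo $k$ for $r$ in the same range.

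Therefore Theorem \ref{theo 03} applies and yields $\chi_A(D) \leq \lceil k/p \rceil = k/p$, which is the desired conclusion. There is no genuine obstacle: the corollary is purely a translation of Theorem \ref{theo 03} under the divisibility assumption $p \mid k$, which removes the ceiling function from both the modulus on the cycle-length condition and from the bound on $\chi_A(D)$.
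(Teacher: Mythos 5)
Your proposal is correct and matches the paper's (implicit) argument: the paper states this corollary as an immediate consequence of Theorem \ref{theo 03}, and your observation that $p \mid k$ forces $\hat{k} = \lceil k/p \rceil p = k$ and $\lceil k/p \rceil = k/p$ is exactly the specialization intended. Nothing is missing.
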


As the dichromatic number of a digraph with all of its arcs symmetric (girth $g=2$) is equivalent to the chromatic number of an undirected graph, we have as a corollary Tuza's result, Theorem \ref{tuza}.


Now we will prove our last main result, Theorem \ref{circumference and girth}.

\begin{proof}[of Theorem \ref{circumference and girth}]
Let $T$ be a DFS tree of $D$ rooted at $r$ with length $t$ and generations $V_0$, \ldots, $V_t$.  We will partition $V$ into 
\begin{math} k=\lceil{ \frac{c-1}{g-1}}\rceil+1 \end{math} 
acyclic subsets as follows.
\begin{enumerate}[{Step }1:]
\item Let 
\begin{math} k'= \lceil{ \frac{t+1}{g-1}}\rceil \end{math} and
\begin{math} U_{h'} = \bigcup_{j=0}^{g-2} V_{h'(g-1)+j} \end{math} 
for each $h'\in [k']_0$, where 
\begin{math} V_{(k'-1)(g-1)+j}=\emptyset \end{math} 
whenever 
\begin{math} (k'-1)(g-1)+j>t. \end{math}
In Lemma \ref{lemma partition acyclic sets} we saw that the sets $U_0$, \ldots, $U_{k'-1}$ form a partition of $V$ into acyclic sets.
\item Give color $h\in[k]_0$ to vertices in $U_{h'}$ whenever 
\begin{math} h' \equiv h \pmod{k}, \end{math}
where $h'\in [k']_0$.
The coloring is well defined as the sets $U_0$, \ldots, $U_{k'-1}$ form a partition of $V$ and $h'$ is congruent with exactly one $h\in[k]_0$ modulo $k$; thus, each vertex $v\in V$ is colored with exactly one color.
\end{enumerate}

We will see that each color class is acyclic. Suppose by contradiction that there is a monochromatic cycle $C$. By Lemma \ref{lemma backward arc}, there is a backward arc  $(u,v)\in A(C)$.  As $C$ is monochromatic, there are $h\in [k]_0$ and $i_1,i_2 \in [k']$ such that $u\in U_{i_1}$, $v\in U_{i_2}$ and 
\begin{math} i_1 \equiv i_2 \equiv h \pmod{k}. \end{math}

Observe that 
\begin{math} u\in V_{i_1(g-1)+j_1} \end{math} 
for some $j_1\in[g-1]_0$ and 
\begin{math} v\in V_{i_2(g-1)+j_2} \end{math} 
for some $j_2\in[g-1]_0$, by the definition of $U_{i_1}$ and $U_{i_2}$. Hence, 
\begin{math} l(P_u)=i_1(g-1)+j_1 \end{math} 
and 
\begin{math} l(P_v)=i_2(g-1)+j_2. \end{math}
Moreover, as $(u,v)$ is a backward arc it follows that 
\begin{math} i_1(g-1)+j_1>i_2(g-1)+j_2, \end{math}
where 
\begin{math} i_1,i_2\in[k']_0 \end{math} 
and 
\begin{math} j_1,j_2\in[g-1]_0; \end{math}
and thus $i_1 \geq i_2$. If  $i_1=i_2$, then $u,v\in U_{i_1}$ which is impossible, by Lemma \ref{lemma partition acyclic sets}. Therefore, $i_1>i_2$. Recall 
\begin{math} i_1\equiv i_2 \pmod{k}, \end{math} hence 
\begin{math} i_1-i_2=qk \end{math} 
for some $q\geq 1$.

Now, let $P$ be the $vu$-path in $T$. It follows that 
\begin{math} l(P)=l(P_u)-l(P_v)=(i_1(g-1)+j_1)-(i_2(g-1)+j_2)=(i_1-i_2)(g-1)+(j_1-j_2). \end{math} 
On the one hand, since  
\begin{math} i_1 \equiv i_2 \pmod{k}, \end{math}
we have 
\begin{math} l(P)\equiv j_1-j_2 \pmod{k} \end{math} 
where 
\begin{math} |j_1-j_2|<g-1 \end{math} 
as 
\begin{math} j_1,j_2\in[g-1]_0; \end{math}
on the other hand, as 
\begin{math} i_1-i_2=qk \end{math} 
with $q\geq 1$, we have 
\begin{math} l(P)= (i_1-i_2)(g-1)+(j_1-j_2) = qk(g-1) +(j_1-j_2)\geq k(g-1) +(j_1-j_2). \end{math}

Consider the cycle 
\begin{math} \gamma=P\cup(u,v), \end{math}
we have 
\begin{math} g\leq l(\gamma)\leq c \end{math}
and  
\begin{math} l(\gamma)=l(P)+1; \end{math}
and thus 
\begin{math} g-1\leq l(P)\leq c-1. \end{math}

However,  
\begin{math} l(P)\geq k(g-1) +(j_1-j_2)=\left(\lceil{ \frac{c-1}{g-1}}\rceil+1\right)(g-1)+(j_1-j_2) \geq \left( \frac{c-1}{g-1}+1\right)(g-1)+(j_1-j_2) = (c-1+g-1)+(j_1-j_2) > (c+g-2)-(g-1)=c-1, \end{math}
which is impossible. Hence the color classes are acyclic and we have the result.

\end{proof}

The bound of Theorem \ref{circumference and girth} is trivially reached by directed cycles (with no symmetric arcs) and complete digraphs (digraphs where each pair of different vertices induces a directed 2-cycle). And it improves Bondy's result whenever $g\geq 3$.

\acknowledgements
\label{sec:ack}

We would like to thank the anonymous referees for many comments which improved substantially the rewriting of this paper.

\nocite{*}
\bibliographystyle{abbrvnat}
\bibliography{Cordero-Michel_and_Galeana-Sanchez_New_bounds_for_the_dichromatic_number}
\label{sec:biblio}

\end{document}